\documentclass[a4paper,11pt]{elsarticle}
\usepackage{amsthm}
\usepackage{amsmath}
\usepackage{amssymb}
\usepackage{enumerate}
\usepackage{graphics}
\usepackage{subfig}
\usepackage{tikz}
\usetikzlibrary{arrows}
\def\SD{\mathbf{dist}}
\def\bd{\mathbf{d}}
\def\bbd{\mathbf{bd}}
\def\bdd{\mathbf{dd}}
\def\mc{\mathbf{maxC}}
\def\a{d^+}
\def\b{d^-}
\def\BrbG{balanced red-blue graph}
\newcommand\old[1]{}

\makeatletter
\def\ps@pprintTitle{%
  \let\@oddhead\@empty
  \let\@evenhead\@empty
  \def\@oddfoot{\reset@font\hfil\thepage\hfil}
  \let\@evenfoot\@oddfoot

\makeatother
    }
\numberwithin{equation}{section}
\newtheorem{theorem}{Theorem}[section]
\newtheorem{prop}[theorem]{Proposition}
\newtheorem{conj}{Conjecture}
\newtheorem{lemma}[theorem]{Lemma}

\theoremstyle{definition}
\newtheorem{definition}[theorem]{Definition}
\theoremstyle{remark}
\newtheorem{remark}[theorem]{Remark}

\begin{document}
\begin{frontmatter}
\title{On the swap-distances of different realizations of a graphical degree sequence\tnoteref{darpa}}
\author[renyi]{P\'eter L. Erd\H os\fnref{elp}}
\author[ttk]{Zolt\'an Kir\'aly\fnref{kir}}
\author[renyi]{Istv\'an Mikl\'os\fnref{mik}}
\address[renyi]{Alfr\'ed R{\'e}nyi Institute, Re\'altanoda u 13-15 Budapest, 1053 Hungary\\
        {\tt email}: $<$erdos.peter,miklos.istvan$>$@renyi.mta.hu}
\address[ttk]{Department of Computer Science and EGRES (MTA-ELTE), E\"otv\"os University, P\'azm\'any P\'eter s\'et\'any 1/C, Budapest, 1117 Hungary\\
        {\tt email}: kiraly@cs.elte.hu}
\fntext[elp]{PLE   was supported in part by the Hungarian NSF, under contract NK 78439.}
\fntext[kir]{ZK was supported by
grants (no.\ CNK 77780 and no.\ CK 80124) from the National Development
 Agency of Hungary, based on a source from the Research and Technology
Innovation Fund, and also by T\'AMOP grant 4.2.1./B-09/1/KMR-2010-0003.}
\fntext[mik]{IM was supported in part by a Bolyai postdoctoral stipend and by the Hungarian NSF, under contract F61730.}
\tnotetext[darpa]{PLE and IM acknowledge financial support from grant \#FA9550-12-1-0405 from the U.S. Air Force Office of Scientific Research (AFOSR) and the Defense Advanced
Research Projects Agency (DARPA).}
\begin{abstract}
One of the first graph theoretical problems which got serious attention
(already in the fifties of the last century)  was to decide whether a given
integer sequence is equal to the degree sequence of a simple graph (or it is
{\em graphical} for short). One method to solve this problem is the greedy
algorithm of Havel and Hakimi, which is based on the {\em swap}
operation. Another, closely related question is to find a sequence of swap
operations to transform one graphical realization into another one of the same
degree sequence. This latter problem got particular emphases in connection of
rapidly mixing Markov chain approaches to sample uniformly  all possible
realizations of a given degree sequence. (This becomes a matter of interest in
connection of -- among others -- the study of large social networks.) Earlier
there were only crude upper bounds on the shortest possible length of such
swap sequences between two realizations. In this paper we develop formulae
(Gallai-type identities) for these {\em swap-distance}s of any two realizations of simple undirected or directed degree sequences. These identities improves considerably the known upper bounds on the swap-distances.\\
{\bf AMS classification (2010)}: Primary - 05C07,  Secondary - 05C20, 05C45
\end{abstract}
\begin{keyword} graphical degree sequences \sep Havel-Hakimi algorithm \sep swaps \sep swap-distance \sep  triangular $C_6$ cycle
\end{keyword}
\end{frontmatter}

\section{Introduction}\label{sec:intro}

The comprehensive study of graphs (or more precisely the {\em linear graphs}, as it was called in that time) began sometimes in the late forties, through seminal works by P.\ Erd\H{o}s, P.\ Tur\'an, W.T.\ Tutte, T.\ Gallai and others. One problem which received considerable attention was the existence of certain subgraphs of a given graph $G.$ Such a subgraph could be, for example,  a perfect matching in a (not necessarily bipartite) graph, or a Hamiltonian cycle, etc.  Generally these substructures are called {\em factors}. The first
couple of important and rather general results of this kind were due to Tutte (in 1952) who gave necessary and sufficient conditions for the existence of  $f$-factors \cite{T52,T54}.

In cases where $G$ is a complete graph, the $f$-factor problem becomes easier:
then we are simply interested in the existence of a graph with a given degree
sequence,  and at least two solutions of different kind were developed around
1960. One was due to Havel \cite{H55} who constructed a famous greedy
algorithm to answer this {\em degree sequence problem}. His algorithm was
based on the notion of {\em swap}. It is interesting to mention the almost
completely forgotten  paper of Senior (\cite{S51}) who studied the problem of
generating graphs with multiple edges but without loops: his goal was to find
possible molecules with given composition but with different structures. He
also discovered the swap operation, but he called it {\em transfusion}. The
other approach was the equally famous Erd\H{o}s-Gallai theorem (\cite{EG})
which gave a necessary and sufficient condition in the form of a sequence of
inequalities. In this latter paper  Havel's method was an ingredient of the
proof and the authors also observed that their result is a consequence of
Tutte's $f$-factor theorem.

In 1962 Hakimi studied the degree sequence problem in undirected graphs with multiple edges and loops (\cite{H62}). He developed an Erd\H{o}s-Gallai type result for this much simpler case, and for the case of simple graphs he rediscovered the greedy algorithm of Havel. Since then this algorithm is referred to as the {\em Havel--Hakimi algorithm}.

Already from the general $f$-factor theorem of Tutte one can derive
a polynomial time algorithm to solve the degree sequence problem,
but it was not done that time. Havel's algorithm provided a quadratic (in $n$)
time construction method of the required graphical {\em realization}.

\medskip\noindent
The construction of -- preferable "typical" -- graphical realizations of given degree sequences became an important  problem in the last two decades in connection of the emergence of huge networks in social sciences, medicine, biology or the internet technology, naming only some.

Mentioning just one example here, data is collected from anonymous surveys  in epidemics studies of sexually transmitted diseases, where the individuals specify the number of different partners they have had in a given period of time, without revealing their identity. In this case, epidemiologists should construct typical contact graphs obeying the empirical degree sequence to estimate epidemiological parameters.

To construct all possible realizations of a given degree sequence is typically very time consuming task since usually there are exponentially many different realizations. Here we do not consider the computationally almost hopeless isomorphism problem. In this paper the vertices are labeled (therefore distinguishable) and two isomorphic realizations where the isomorphism changes the labels are considered to be different ones.

The methodology to construct all possible realizations is already not self-evident: for example the Havel-Hakimi algorithm is not strong enough to find all of them. It is also important that no particular realization should be outputted more than once, and, finally, that the waiting times between two consecutive outputs  should not be too long. These concerns were addressed in \cite{kiraly}.

However when our goal is to find a "typical" realization but there are exponentially many different ones then generating all of them and choose one realization randomly is simple not feasible. One way to overcome this hardness is to construct a good Monte Carlo Markov Chain (MCMC) method. To that end we need a particular operation to walk on the space of the different realizations; this operation is called a {\em swap}, and it is essentially the same as the operation in Havel's algorithm: we choose four vertices, where in the induced subgraph there is a one-factor, while an other one-factor is missing, and we exchange the existing one-factor into the missing one. This clearly preserves the degree sequence.

It is interesting to recognize, that -- as one can learn this fact from Erd\H{o}s and Gallai -- the swap operation for this purpose was originally discovered by Petersen, already in 1891 (\cite{pet}).

The problem of Erd\H{o}s-Gallai type characterization of bipartite degree sequences were studied already in the mid-fifties by Gale (in \cite{G57}) using network flow techniques.   In the same year Ryser gave a direct proof for this characterization, using a matrix theoretical language (\cite{R57}) and showed  that  any particular realization of a bipartite degree sequence can be transformed, using sequence of swaps, into any other realization.  Both results were formulated on the language of directed graphs without multiple edges but with possible loops. (For the connection between bipartite and directed degree sequence problems see Section \ref{sec:dir}.) The corresponding result for simple directed graphs (no loops, no multiple edges) is due to Fulkerson (\cite{F60}).

Havel-Hakimi type results are part of the folklore in connection with bipartite degree sequences but it is hard to find a definitive reference for it (but book \cite{W} discusses the problem in Exercise 1.4.32 and paper \cite{K09} provides one proof as a by-product).

In case of simple directed graphs Havel-Hakimi type results were proved by
Kleitman and Wang (\cite{KW73}) for an extension given by Kundu \cite{Ku73}.
Swap sequences between realizations of directed graphs  were rediscovered in \cite{EMT09}. The situation here is more complicated than in the previous cases: using only two edges for a swap is not always enough, sometimes we have to use three-edge swaps. Moreover there are two different kinds of three-edge swaps: in the {\bf type 1} swap the three edges form an oriented $C_3$ and the result of the swap is the oppositely  oriented triangle. In the {\bf type 2} swap the three involved {\em directed edges} determine  four vertices (see \cite{KW73, EMT09}).

In \cite{EMT09} a weak upper bound was proved for the swap-distance of two realizations of directed degree sequences. In the proof all three types of swaps were applied, and counted as one.  However, as LaMar proved recently (in {\cite{lamar}), swap sequences between any two realizations may omit completely type 2 triple-swaps. In Section \ref{sec:dir} we will strengthen this result (see Theorem \ref{lm:lamar}). Finally Greenhill proved (\cite{green}) that in case of regular directed degree sequences (when all in-degrees and out-degrees are the same) all triple-swaps can be omitted (if $n>3$). The reason for the need for some type 2 swaps in \cite{KW73, EMT09} was simple: in the Havel-Hakimi situation (by analogy) it was sought for one swap changing a particular directed edge to a particular non-edge. However, when a  transformation sequence is looked for, then this is not a requirement. As it turned out, type 2 triple-swaps are so called non-triangular $C_6$-swaps (see Remark \ref{lm:no-trian}) and can be substituted by two regular swaps.

These problems have long and lively history but we do not survey that here. We just want to point out that knowing the maximum length of the necessary swap sequences can lead to better estimations for the mixing time of Markov chains using swap operations.
Until now there were only weak upper bounds on those lengths: they are surely shorter, than twice the number of edges in the realizations (which is equal to the sum of the values in the degrees sequence). This applies for simple directed or undirected graphical degree sequences (including bipartite ones as well).  (See for example \cite{EMT09}).

The main goal of this paper is to determine a formula for the {\em swap-distance} (that is the length of the possible shortest swap sequence) between any two particular realizations $G_1$ and $G_2.$ Here we will prove a Gallai-type identity
\begin{equation}\label{eq:weak}
\SD(G_1, G_2) = \frac{ |E(G_1) \Delta E(G_2)|}{2} -\mc(G_1,G_2),
\end{equation}
where $\Delta$ denotes the symmetric difference and $\mc$ is a positive integer which depends on the realizations. In case of directed degree sequences triple-swaps should count as 2 in the swap-distance. (For an explanation, see the definitions after Lemma \ref {lm:no-touch}.) However, as it will turn out, we only need type 1 triple-swaps. We can  forbid type 2 triple-swaps while the equation does not change.

It is very important to understand that while the right side of equation (\ref{eq:weak}) can be interpreted indeed as "the exact value of the swap-distance" -- actually $\mc$ is possibly (probably) not an efficiently computable value. We think that the right goal here is to find good estimations for $\mc$. We made the first steps into this direction, see Theorems \ref{th:undir_bound}, \ref{th:bip_bound} and \ref{th:dir_bound}.

\bigskip\noindent

The structure of the paper is the following: in Section \ref{sec:def} we introduce the definitions and recall some known facts and algorithms. In Section \ref{sec:undir} we prove (\ref{eq:weak}) for undirected degree sequences. In the very short Section \ref{sec:bi} we describe the consequences of the previous results for bipartite degree sequences. Finally in Section \ref{sec:dir} we discuss the problem for directed degree sequences based on further considerations on realizations of bipartite degree sequences.

\section{Definitions, notations}\label{sec:def}

Throughout the paper $G$ denotes an undirected simple graph with vertex set $V(G)= \{v_1,v_2,\ldots,v_n \}$ and edge set $E(G)$. Consider a  sequence of positive integers $\bd=(d_1,d_2,\ldots,d_n).$ If there is a simple graph $G$ with degree sequence $\bd$, i.e., where for each $i$ we have $d(v_i)=d_i$, then we call the sequence $\bd$ a {\em graphical sequence} and in this case we also say that $G$ {\em realizes}  $\bd$.

\bigskip\noindent

The analogous notions for bipartite graphs are the following: if $B$ is a simple bipartite graph then its vertex classes will be denoted by $U(B)=\{ u_1,\ldots, u_k\}$ and $ W(B)=\{w_1,\ldots,w_\ell\}$, and we keep the notation $V(B)=U(B)\cup W(B)$.  The {\em
bipartite degree sequence} of $B$, $\bbd(B)$ is defined as follows:
$$
\bbd(B)=\Big(\big (d(u_1), \ldots,d(u_k)\bigr ), \bigl (d(w_1),\ldots,d(w_\ell)\bigr )\Big ).
$$

Let $G$ be a simple graph and assume that $a,b,c$ and $d$ are different vertices.  If $G$ is bipartite graph $B$ then we also require that for  $a,b\in U(B),\; c,d\in W(B)$. Furthermore assume that $(a,c), (b,d) \in E(G)$ while $(b,c), (a,d) \not \in E(G)$. Then
\begin{equation}\label{eq:swap}
E(G')= E(G) \setminus \{(a,c), (b,d)\} \cup \{(b,c), (a,d)\}
\end{equation}
is another realization of the same degree sequence (and if $G$ is a bipartite
graph then $G'$ remains bipartite). The operation described above is called a
{\em swap}. This operation is used in the Havel-Hakimi algorithm, and Petersen
proved \cite{pet} -- and several authors later reproved -- that any
realization of a degree sequence can be transformed into any another
realization of the same degree sequence using only consecutive swap
operations.

As throughout the paper all graphs will be simple, from this point we will omit the word ``simple''.

A graph $G$, where the edges are colored by either red or blue, will be called a {\em red-blue graph}. For vertex $v$ denote by $d_r(v)$ and $d_b(v)$ the degree of vertex $v$ in red and blue edges, resp. This red-blue graph is {\em balanced} if for each $v\in V(G)$ we have $d_r(v)=d_b(v).$

A \emph{circuit} in a graph $G$ is a closed trail (each edge can be used at most once).
As the graph is simple, a circuit is determined by the sequence of the vertices $v_0,\ldots,v_t$, where $v_0=v_t$. Note that there can also be other indices $i<j$ such that
$v_i=v_j$. A circuit is called a \emph{cycle}, if its simple, i.e., for any $i<j$, $\;v_i=v_j$ only if $i=0$ and $j=t$.

A circuit (or a cycle) in a \BrbG\ is called \emph{alternating}, if the color of its edges alternates (i.e., the color of the edge from $v_i$ to $v_{i+1}$ differs from the color of the edge from $v_{i+1}$ to $v_{i+2}$, and also edges $v_0v_1$ and $v_{t-1}v_t$ have different colors -- consequently alternating circuits have even length).

By Euler's famous method one can easily prove the following

\begin{prop}\label{prop:euler}
If $G$ is a \BrbG\ then the edge set can be decomposed to alternating   circuits. If $B$ is a bipartite \BrbG\ then the edge set can be decomposed into alternating cycles.
\end{prop}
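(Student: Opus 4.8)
The plan is to treat the balance condition $d_r(v)=d_b(v)$ as the red-blue analogue of the classical ``every degree is even'' hypothesis, and to mimic Euler's circuit-extraction argument so that the extracted closed trails alternate in color. At every vertex $v$ the total degree equals $d_r(v)+d_b(v)=2d_r(v)$, so $G$ is Eulerian in the ordinary (uncolored) sense; a plain Euler decomposition, however, need not respect the coloring. To force alternation I would impose a \emph{transition system}: since $d_r(v)=d_b(v)$, at each $v$ fix an arbitrary bijection between the red edge-ends and the blue edge-ends at $v$. Then run Euler's greedy walk, obeying the transitions: having arrived at $v$ along an edge-end $e$, leave along the end paired with $e$. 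Equivalently one may simply insist on alternating colors while walking, starting along an unused red edge and always continuing along an unused edge whose color is opposite to the edge just traversed.

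The core of this part is the claim that such a walk never stalls except upon returning to its starting vertex with the correct closing color, which I would prove by the usual Euler counting refined by color. At any interior vertex $w$, each previous pass through $w$ consumed exactly one red end and one blue end, so equally many ends of each color have been used so far; hence if we now arrive at $w$ along a color $c$, strictly fewer than $d_r(w)=d_b(w)$ ends of the opposite color have been used, and an exit of the opposite color is available, so the walk cannot stop at $w$. The same count at the start vertex $v_0$ shows that a return along a red edge is impossible (it would force more red ends used than exist), so the walk can close only along a blue edge, i.e.\ the extracted closed trail is alternating. Removing it leaves a smaller \BrbG{} (each color's degree drops by the same amount at every vertex), so induction finishes the circuit decomposition.

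For the bipartite statement I would start from the alternating circuit decomposition just obtained and split each circuit into alternating cycles. If an alternating circuit $v_0 v_1 \cdots v_t$ is not already a cycle, some vertex repeats, say $v_i=v_j$ with $i<j$; cutting there produces the two shorter closed walks $v_i v_{i+1}\cdots v_j$ and $v_0\cdots v_i v_j\cdots v_t$. Here bipartiteness is decisive: consecutive vertices lie in opposite classes, so $v_i=v_j$ forces $i\equiv j \pmod 2$, i.e.\ $j-i$ is even. This parity is exactly the condition under which the colors agree correctly at both splice points, so both pieces inherit the alternating property. Iterating the split until no vertex repeats expresses every circuit as a union of edge-disjoint alternating cycles, proving the second assertion.

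I expect the delicate point to be the two color-refined counting arguments, guaranteeing that the alternating walk never stalls at an interior vertex and that it returns to $v_0$ with the opposite color, together with the parity observation that $j-i$ is even in the bipartite case. This last step is where bipartiteness (and not merely balance) is genuinely used, which explains why only the bipartite case yields cycles rather than merely circuits.
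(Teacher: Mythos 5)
Your proof is correct and takes essentially the approach the paper intends: the paper simply asserts the proposition follows ``by Euler's famous method,'' and your color-refined Euler walk --- with the counting argument showing the walk can only stall back at the start vertex along the opposite color, plus induction on the remaining balanced graph --- is precisely that method adapted to the red-blue setting. Your even-parity splitting of circuits into cycles in the bipartite case likewise matches the idea the paper later records as Lemma~\ref{th:even-distance}, so you have merely supplied the details the paper leaves to the reader.
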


If two graphs, $G_1$ and $G_2$ are different realizations of the same degree sequence, then we associate with them the following \BrbG. The vertex set is   $V(G_1)=V(G_2)$ and the edge set is the symmetric difference $E(G_1) \Delta E(G_2)$. An   edge is colored red, if it is in $E(G_1)-E(G_2)$, and it is colored blue, if it is in $E(G_2)-E(G_1)$.

\begin{definition}
  If $G$ is a \BrbG\ then let $\mc_u(G)$ denote the number of the circuits in a
  maximum size $(=$ maximum cardinality$)$ alternating circuit decomposition
  of $G$. If $G_1$ and $G_2$ are two realizations of the same degree sequence
  then let $\mc_u(G_1,G_2)=\mc_u(G)$, where $G$ is the associated \BrbG.
\end{definition}

\begin{definition}
  Let $G_1$ and $G_2$ be two given realizations of $\mathbf{d}.$ Denote by
  $\mathbf{dist}_u(G_1,G_2)$ the length of the shortest swap sequence from
  $G_1$ to $G_2$.
\end{definition}

A pair of vertices $u$ and $v$ will be called a \emph{chord}, if it can hold
an edge. That is for non-bipartite graphs $uv$ is a chord if and only if $u\ne
v$, but for a bipartite graph $B$, $uv$ is a chord if and only if $u\in U(B)$
and $v\in W(B)$ or vice versa. If a circuit $C=v_0\ldots,v_t$ is given and
$v_iv_j$ is a chord, then we will also call the pair $ij$ of subscripts a {\em chord}.

\smallskip

For directed graphs we consider the following definitions: Let $\vec G$ denote
a directed graph (no parallel edges, no loops) with vertex set $X(\vec G) =
\{x_1,x_2, \ldots,x_n \}$ and edge set $E( \vec G)$. We use the
bi-sequence
$$
\bdd(\vec G) =\Big (\left (\a_1,\a_2,\ldots,\a_n \right ), \left (\b_1, \b_2,
  \ldots, \b_n\right ) \Big )
$$
to denote the degree sequence, where $\a_i$ denotes the out-degree of vertex $x_i$ while $\b_i$ denotes its in-degree. A bi-sequence of non-negative integers is called a {\em directed degree sequence} if there exists a  directed graph $\vec G$ such that $\mathbf{(\a,\b)}=\bdd(\vec G)$. In this case we say that $\vec G$ {\em  realizes} our directed degree sequence.

A directed graph $\vec G$ is a {\em \BrbG}, if for every vertex the red in-degree is the same as the blue in-degree, and moreover the red out-degree is the same as the blue out-degree. Thus if $\vec G_1$ and $\vec G_2$ are different realizations of the same directed degree sequence then the associated red-blue graph (defined similarly as for the undirected case) is a \BrbG.

The definition of alternating circuit differs from the one defined for undirected graphs as follows. A circuit $v_0,\ldots,v_t$ in a \BrbG\ $\vec G$ is alternating, if both the colors and the directions alternates (e.g., if $v_{i}v_{i+1}$ is a red directed edge then $v_{i+2}v_{i+1}$ is a blue directed edge).

Again, by Euler's method one can easily prove the following:

\begin{prop}\label{prop:direuler}
If $\vec G$ is a \BrbG\ then the edge set can be decomposed into alternating circuits.
\end{prop}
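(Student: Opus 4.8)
The plan is to mimic the proof of the undirected analogue, Proposition~\ref{prop:euler}, replacing the single ``red-degree equals blue-degree'' balance by the two directed balance conditions and setting up a suitable \emph{transition system} at every vertex. Write $r^-(v),b^-(v),r^+(v),b^+(v)$ for the red and blue in- and out-degrees of a vertex $v$ in $\vec G$. The hypothesis that $\vec G$ is a \BrbG\ says precisely that $r^-(v)=b^-(v)$ and $r^+(v)=b^+(v)$ for every $v$. Hence at each vertex I can fix two bijections: one matching the incoming red edge-ends to the incoming blue edge-ends, and one matching the outgoing red edge-ends to the outgoing blue edge-ends. Each edge of $\vec G$ has exactly two ends, an out-end at its tail and an in-end at its head, and each of these ends is matched by exactly one of the above bijections, so the collection of all these local matchings partitions the edge-ends into pairs.

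Next I would follow these transitions to trace closed walks, exactly as in Euler's method. Starting from any edge, say a red edge $v_0\to v_1$, its in-end at $v_1$ is matched to a blue in-edge $v_2\to v_1$; the out-end of that blue edge at $v_2$ is matched, by the outgoing matching at $v_2$, to a red out-edge $v_2\to v_3$; the in-end of this red edge at $v_3$ is matched to a blue in-edge $v_4\to v_3$, and so on. Since every edge-end lies in exactly one matched pair, each edge belongs to exactly one such walk, the walks are closed, and together they partition $E(\vec G)$. It remains to check that each walk is an alternating circuit in the sense of the definition: by construction consecutive edges always have opposite colors, and the vertices are visited in the pattern $v_0\to v_1\leftarrow v_2\to v_3\leftarrow v_4\cdots$, i.e.\ the traversal alternates between ``source'' vertices (both incident walk-edges outgoing, one red and one blue) and ``sink'' vertices (both incident walk-edges incoming, one red and one blue). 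This is exactly the prescribed alternation of colors and directions, so each walk is an alternating circuit.

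The one point needing genuine care, rather than being purely formal, is verifying that the local matchings really do force the direction pattern demanded by the definition (a red edge $v_iv_{i+1}$ followed by a blue edge $v_{i+2}v_{i+1}$), and in particular that at every vertex the two kinds of transitions -- pairing two in-ends, respectively two out-ends -- never get mixed; this is guaranteed because an in-end is only ever matched to an in-end and an out-end only to an out-end. Everything else (closure of the walks, that they use each edge once, and that they cover all edges) is the standard Euler argument already invoked for Proposition~\ref{prop:euler}, so no separate induction on the number of edges is needed. The only degenerate case, a vertex with no incident edges, is vacuous and can be ignored.
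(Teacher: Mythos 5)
Your proof is correct and is essentially the paper's approach: the paper gives no details, merely invoking ``Euler's method,'' and your local red--blue pairing of in-ends and of out-ends (a transition system) is the standard way to carry out that method for alternating decompositions, since the union of the pairing matching and the ``same edge'' matching on edge-ends splits into cycles that are exactly the desired alternating circuits. The one point you rightly flag -- that in-ends are only ever paired with in-ends and out-ends with out-ends, which is what forces the directions to alternate as in the paper's definition -- is indeed the only place the directed balance conditions $r^-(v)=b^-(v)$ and $r^+(v)=b^+(v)$ are used, so nothing is missing.
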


\begin{definition}
 Assume that $\vec G$ is a directed \BrbG\ and let $\mc_d (\vec G)$ denote the number of   the circuits in a maximum size alternating circuit decomposition of $\vec  G$. If $\vec G_1$ and $\vec G_2$ are two realizations of the same directed degree  sequence then let $\mc_d(\vec G_1,\vec G_2)=\mc_d(\vec G)$, where $\vec G$  is the associated \BrbG.
\end{definition}

For directed graphs we use the old trick, applied already by Gale \cite{G57}: each directed graph $\vec G$ can be represented by a bipartite graph $B({\vec G})$, where each class consists of one copy of every vertex. The edges adjacent to a vertex $u_x$ in class $U$ represent the out-edges from $x$, while the edges adjacent to a vertex $w_x$ in class $W$ represent the in-edges to $x$ (so a directed edge $xy$ is identified with the edge $u_xw_y$). Note that the directed degree sequence of $\vec G$ is the {\bf same} as the bipartite degree sequence of  $B({\vec G})$. If $\vec G$ is a directed \BrbG\ then naturally we get $B({\vec G})$ as a \BrbG, and the alternating circuits of $\vec G$  corresponds to the alternating cycles of $B({\vec G})$. For an alternating circuit $\vec C$ of $\vec G$ we denote the corresponding alternating cycle of $B({\vec G})$ by $C$.

As loops are not allowed in $\vec G$, edges of the form $u_xw_x$ are also forbidden in $B({\vec G})$, so they will be called \emph{non-chords}.

\smallskip

For two graphs $G_1$ and $G_2$ (or bipartite graphs or directed graphs) with the same degree sequence (or bipartite degree sequence or directed degree sequence, resp.) we will use $H'(G_1,G_2)$ for the {\em halved Hamming distance} $\frac{ |E(G_1) \Delta E(G_2)|}{2}$. Note that $H'(G_1,G_2)$ is the same as the number of red (or blue) edges in the associated \BrbG\ $G$.

\section{Undirected degree sequences}\label{sec:undir}

In this Section, we prove equality (\ref{eq:weak}) for  undirected degree sequences.

\begin{lemma}\label{th:even-distance}
Let $C = v_0, v_1, \ldots v_{2t}=v_0$ be an alternating circuit in a \BrbG\ $G,$ in which for some $i<j<2t$, $j-i$ is even and $v_i = v_j$. Then the circuit can be decomposed into two shorter alternating circuits.
\end{lemma}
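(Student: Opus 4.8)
The plan is to cut the circuit open at the repeated vertex $v_i = v_j$, reconnect the two pieces into two separate closed circuits, and then verify that the evenness of $j-i$ is exactly what forces both pieces to remain alternating. Writing $e_k$ for the edge $v_{k-1}v_k$, I would let $C_1$ consist of the edges $e_{i+1},\ldots,e_j$ (the walk $v_i,v_{i+1},\ldots,v_j=v_i$) and let $C_2$ consist of the complementary edges $e_1,\ldots,e_i,e_{j+1},\ldots,e_{2t}$ (the walk $v_0,\ldots,v_i=v_j,v_{j+1},\ldots,v_{2t}=v_0$). Since $v_i=v_j$ and $v_0=v_{2t}$, both walks are closed; and since their edge sets partition $E(C)$ while $C$ uses each edge at most once, each of $C_1,C_2$ is again a circuit.

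Next I would record that both are strictly shorter than $C$. The length of $C_1$ is $j-i$ and that of $C_2$ is $2t-(j-i)$; because $i<j<2t$ and $j-i$ is even we get $2 \le j-i \le 2t-2$, so each length lies strictly between $0$ and $2t$. In particular neither piece is empty and neither equals $C$.

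The crux is the alternation, and this is exactly where the parity hypothesis does the work. Every place where two consecutive edges of $C_1$ or of $C_2$ meet at a common vertex is either a junction already present in $C$ --- hence alternating because $C$ is --- or one of the two new seams created at $v_i=v_j$ by the rewiring. In $C_1$ the new seam glues $e_j$ to $e_{i+1}$, and in $C_2$ it glues $e_i$ to $e_{j+1}$. Coloring $e_k$ by the parity of $k$ (legitimate since $C$ alternates and has even length), the two edges of the first seam differ in color precisely when $j$ and $i+1$ have opposite parity, and those of the second seam precisely when $i$ and $j+1$ have opposite parity; both conditions are equivalent to $j-i$ being even. Thus the hypothesis makes both seams alternate, so $C_1$ and $C_2$ are both alternating circuits.

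The only real obstacle is keeping the parity bookkeeping straight at the seams; there is no subtlety in the closed-walk structure, and the argument is uniform (the degenerate case $i=0$, where the junction at $v_0$ coincides with the seam, is covered by the same parity check). It is worth noting that had $j-i$ been odd, the identical split would produce a color clash at $v_i=v_j$, so the evenness assumption is not merely convenient but genuinely necessary for this construction.
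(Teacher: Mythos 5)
Your proof is correct and follows essentially the same route as the paper's: both split the circuit at the repeated vertex $v_i=v_j$ into the two closed sub-walks $v_i,\ldots,v_j$ and $v_j,\ldots,v_{2t},v_1,\ldots,v_i$, and use the evenness of $j-i$ to see that each piece alternates. The paper states this in one sentence; your version merely spells out the seam-parity bookkeeping that the paper leaves implicit.
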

\begin{proof}
Since both $v_i, v_{i+1}, \ldots v_j$ and $v_j, v_{j+1}, \ldots v_{2t}, v_1, \ldots v_i$ contains  even number of edges, both of them form alternating circuits.
\end{proof}
\begin{definition}
We call an alternating circuit $C = v_0, v_1, \ldots v_{2t}$ {\bf elementary}, if {\rm  (i)} no vertex appears more than twice in it, and if {\rm (ii)} there exists an integer $0\le i< 2t$, such that both vertices $v_i$ and $v_{i+1}$   occur only once in the circuit.
\end{definition}
\begin{lemma}\label{th:elemi}
Let $C_1,\ldots, C_h$ be a maximum size alternating circuit decomposition of a \BrbG\ $G$ $($that is $h=\mc_u(G)).$ Then each circuit is elementary.
\end{lemma}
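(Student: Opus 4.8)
The plan is to argue by contradiction with the maximality of the decomposition: if some member $C=v_0,v_1,\ldots,v_{2t}$ fails to be elementary, I will exhibit an alternating circuit decomposition of $G$ with strictly more than $h=\mc_u(G)$ members. The whole argument rests on one preliminary observation, immediate from Lemma~\ref{th:even-distance}: in a maximum decomposition, whenever $v_i=v_j$ with $i<j<2t$ the index-gap $j-i$ must be \emph{odd}, since an even gap would let that circuit split into two shorter alternating circuits, raising the count. In particular, a vertex occurring twice occupies one even-indexed and one odd-indexed position, and the two arcs into which it cuts $C$ are both of odd length.

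Condition (i) then follows by a parity count. If a vertex occurred at three positions $p_1<p_2<p_3$ (all $<2t$), two of these indices would share a parity, giving an even gap and contradicting the observation above. Hence no vertex appears more than twice.

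For (ii) I assume (i) and suppose, for contradiction, that no edge of $C$ has both endpoints occurring only once (no ``single--single'' edge). Call a twice-occurring vertex \emph{double}, and regard each double vertex as a chord $\{i,j\}$ of the cyclic position set $\{0,\ldots,2t-1\}$. I distinguish whether two such chords \emph{cross} (interleave). If no two cross, pick the chord $\{a,b\}$ whose shorter arc is smallest: were that arc to contain an endpoint of another chord, non-crossing would force both its endpoints inside, yielding a strictly shorter arc — impossible. So every position strictly inside the shorter arc of $\{a,b\}$ is a single vertex; as this arc has length equal to the odd gap (hence at least $3$, a gap of $1$ being a loop), it contains at least two consecutive single positions, producing a single--single edge — the desired contradiction.

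The remaining case, where two chords cross, is the genuine obstacle. Say $w=v_a=v_b$ and $z=v_p=v_q$ with exactly one of $p,q$ strictly between $a$ and $b$. The naive cut at $w$ is useless, since its two arcs are odd and the pieces are not alternating; in fact the only colour-preserving re-pairings of the four edges of $C$ at $w$ are the original one and the one that \emph{reverses} the arc $v_{a+1},\ldots,v_{b-1}$. Carrying out this reversal produces a genuine alternating circuit $C'$ on the same edge set — a single closed trail, because an arc is reversed rather than detached. The crucial point is that this reversal reflects interior positions by $p\mapsto a+b-p$, and since $a+b$ is odd (the gap $b-a$ being odd) the parity of the gap of the crossing vertex $z$ flips: in $C'$ the two occurrences of $z$ lie at even distance. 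Lemma~\ref{th:even-distance} now splits $C'$ into two alternating circuits, so replacing $C$ by these two pieces yields a decomposition with $h+1$ members, contradicting maximality. Thus the heart of the proof is recognising that a colour-preserving arc reversal (not a cut) is always available at a double vertex and that it toggles the gap-parity of any crossing double vertex, reducing the crossing case to the even-gap split already handled by Lemma~\ref{th:even-distance}.
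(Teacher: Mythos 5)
Your proof is correct and takes essentially the same route as the paper's: the odd-gap parity observation and the three-occurrence pigeonhole for (i) are identical, and your colour-preserving arc reversal at a double vertex (with the reflection $p\mapsto a+b-p$ flipping the gap-parity of a crossing chord, then splitting via Lemma~\ref{th:even-distance}) is exactly the circuit $C'$ the paper constructs to show non-chords cannot cross. Your Case~A (smallest shorter arc of a non-crossing chord has all-single interior) is just a contrapositive reorganization of the paper's final step, which re-indexes so that $0k$ is the shortest non-chord and concludes that positions $1$ and $2$ are unique.
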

\begin{proof}
(i) First assume that a circuit $C_z=v_0,\ldots,v_{2t}$ contains the vertex $v$ three times.
Then two of the occurrences have the same parity, and Lemma~\ref{th:even-distance} applies. But this contradicts to the maximality. Therefore any vertex in any circuit of a maximum size decomposition  occurs at most twice, and the two subscripts of
each repeated vertex (within any circuit) have different parities. We call a pair $0\le i<j<2t$ a \emph{non-chord}, if $v_i=v_j$. The {\em length} of a non-chord $ij$ is defined to be $\min(|i-j|,\; 2t-|i-j|)$.  We proved that each index $i$ is a part of at most one non-chord, and the length of any non-chord is odd.

We next prove that non-chords cannot intersect. More precisely, if $0\le i<k<j<\ell<2t$ then, if $ij$ is a non-chord then $k\ell$ is a chord.  Let $C'$ be the following alternating circuit:
$$
v_0,\ldots,v_i=v_j,v_{j-1},v_{j-2}, \ldots, v_k,\ldots v_{i+1},v_i=v_j,v_{j+1},\ldots,v_{\ell} \ldots v_{2t}.
$$
In this new circuit (which consists of the same edges as the original circuit) the new index of vertex $v_k$ is $k'$, where $k-k'$ is odd. Therefore if $k-\ell$ was odd then $k'-\ell$ is even. Thus for this circuit Lemma ~\ref{th:even-distance} applies -- which in turn shows that the original circuit decomposition is not a maximum size one, a contradiction.

(ii) By re-indexing the vertices of the circuit we may assume that $0k$  is the shortest non-chord of $C_z$.  Since $G$ is simple we have $k>2$. Consequently  indices $1$ and $2$ cannot participate in any non-chord otherwise they would induce crossing non-chords.
\end{proof}

From the middle part of the proof one can deduce a much stronger statement. Given a circuit $C= v_0,\ldots,v_{2t}$ we call a vertex {\em unique}, if it appears exactly once in that circuit.

\begin{theorem}\label{th:elemi_strong}
Let $C_1,\ldots, C_h$ be a maximum size alternating circuit decomposition of a \BrbG\ $G$. Then
\begin{enumerate}[{\rm (i)}]
\item  each circuit $C_z=v_0,\ldots,v_{2t}$ contains at least   $2t/3+2$ unique vertices;
\item  the length of each circuit in the   decomposition is at most $(3/2)(n-1)$, consequently  $$\mc_u(G)\ge \left \lceil   \frac{2|E(G)|}{3n}\right \rceil. $$
\end{enumerate}
\end{theorem}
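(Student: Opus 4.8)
The plan is to establish part (i) by a face-counting argument and then read off part (ii) almost for free. Fix one circuit $C=v_0,\dots,v_{2t}$ of the maximum-size decomposition and collect what the proof of Lemma~\ref{th:elemi} already supplies: each vertex occurs at most twice; the repeated occurrences determine \emph{non-chords} (pairs $i<j$ with $v_i=v_j$); each index lies on at most one non-chord; every non-chord has odd length; and two non-chords never cross. Thus the non-chords form a non-crossing (laminar) family, and since the shortest one has length at least $3$, \emph{every} non-chord cuts the circuit into two arcs of odd length $\ge 3$. Writing $u$ for the number of unique vertices and $p$ for the number of non-chords, the identity $u+2p=2t$ shows that the target inequality $u\ge 2t/3+2$ is equivalent to the clean statement $2t\ge 3(p+1)$.

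To prove $2t\ge 3(p+1)$ I would realize $C$ as a plane graph: place the $2t$ indices on a circle, draw the $2t$ circuit edges as boundary \emph{arcs}, and draw the $p$ non-chords as non-crossing chords inside the disk. Euler's formula gives exactly $p+1$ bounded faces, and since each arc borders a unique bounded face, the bounded faces carry $2t$ arc-sides in total. It therefore suffices to show that every bounded face has at least three arcs on its boundary, for then $2t\ge 3(p+1)$. A bounded face is the region cut out by one non-chord (or by the whole circle, for the outermost region) once the directly nested child non-chords are removed; if it has $g$ boundary arc-segments (``gaps'') then its arc-count is the sum of these $g$ gaps. Every gap is at least $1$, since a gap of $0$ would place a single index on two different non-chords. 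Hence a face with $g\ge 3$ already has at least three arcs, and a face with $g=1$ is bounded by a single odd arc of length $\ge 3$.

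The only delicate case --- and the main obstacle --- is a face with exactly two gaps (a non-chord with a single nested child, or the outer region meeting exactly two chords), where the two gaps could a priori sum to $2$. Here the two remaining ingredients combine. Parity forces the arc-count of such a face to be \emph{even}: for a single nested child it equals $L-L'$, the difference of the two odd arc-lengths of the outer and the inner chord, and for the outer region it equals $2t$ minus two odd lengths. So a two-gap face with only two arcs would need both gaps equal to $1$; but then the two unit arcs run between the very same pair of identified endpoints and hence denote \emph{the same edge}, which is impossible since a circuit is a trail in a simple graph. Therefore a two-gap face has at least $4$ arcs. In every case a bounded face has at least three arcs, and summing over the $p+1$ faces yields $2t\ge 3(p+1)$, which is part (i).

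Part (ii) then follows by counting \emph{distinct} vertices. The circuit uses $u+p=t+u/2$ distinct vertices, and part (i) gives $u+p\ge t+\tfrac12\left(\tfrac{2t}{3}+2\right)=\tfrac{4t}{3}+1$. As these vertices all lie among the $n$ vertices of $G$, we get $\tfrac{4t}{3}+1\le n$, that is $2t\le\tfrac32(n-1)$, the asserted length bound. Finally the circuits partition $E(G)$, so $|E(G)|=\sum_z 2t_z\le \mc_u(G)\cdot\tfrac32(n-1)$, whence $\mc_u(G)\ge \tfrac{2|E(G)|}{3(n-1)}\ge\tfrac{2|E(G)|}{3n}$; since $\mc_u(G)$ is an integer this gives $\mc_u(G)\ge\lceil 2|E(G)|/(3n)\rceil$. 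The crux of the whole argument is the two-gap case above, where laminarity alone is not enough and one must invoke both the odd-length parity and the simplicity of the graph.
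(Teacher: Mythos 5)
Your proof is correct, and its geometric core coincides with the paper's: the same planar realization of the circuit as a convex polygon with the non-crossing (laminar) family of non-chords drawn inside, and the same decisive observation for faces meeting exactly two non-chords, namely that parity (non-chords have odd length) forces the arc-count of such a face to be even, while simplicity of $G$ excludes the value $2$ --- this is precisely the paper's terse claim that $i'-i+j-j'\ge 4$. Where you genuinely diverge is the bookkeeping that turns these per-face facts into the bound. The paper passes to the planar dual, deletes the vertex of the outer face, verifies the result is a tree $T$, proves by induction the tree lemma $n_{\le 1}\ge n_{\ge 3}+2$, and then counts \emph{unique vertices} lying in faces of tree-degree at most two, arriving at $\mu\ge\nu+3$. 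You instead count \emph{boundary arcs} over \emph{all} bounded faces: Euler's formula gives $p+1$ bounded faces, each face carries at least three arcs, and summing gives $2t\ge 3(p+1)$, which is equivalent to the paper's $\mu\ge\nu+3$ via $2t=\mu+2\nu$. Your route is leaner --- no dual graph, no tree-hood verification, no induction --- at the price of checking the three-arc bound on every face rather than only on faces of degree at most two (though for $g\ge 3$ gaps this check is trivial, since each gap has length at least $1$). Part (ii) is handled identically in both arguments. Two small remarks: you should say a word about the degenerate face bounded by no non-chord at all (the case $p=0$), where the single bounded face has $2t\ge 4$ arcs --- implicit, but worth stating; and your phrase ``each arc borders a unique bounded face'' deserves the one-line justification that every arc lies on the circle and hence separates the unbounded face from exactly one bounded one.
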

\begin{proof}
(i) Let $\mu$ denote the number of unique vertices of the circuit and let $\nu$ denote the number of non-unique vertices (not indices). Since no vertex appears more than twice in the circuit therefore $2t=\mu+2\nu$ and the number of non-chords is exactly $\nu$.

If $t=2$ then non-chords do not exist so nothing to prove. Assume now that $t>2$ and consider the following planar graph $P$ with $2t$ vertices. First we draw a convex $2t$-gon on the plane with vertices $p_0,\ldots,p_{2t-1}$. Next for each non-chord $ij$ we connect $p_i$ to $p_j$ by a straight line segment. The proof of Lemma~\ref{th:elemi} shows that there are no crossing non-chords therefore this is a planar embedding.

Now we take the planar dual $P^*$ and delete the vertex of the dual corresponding to the infinite face of $P$. We call the resulting graph $T$.

It is easy to see that $T$ is a tree. Indeed, we can argue by contradiction. If $T$ contains a cycle then the original planar graph contains a vertex $v$ within this cycle. But each original vertex of the graph is neighboring to the ocean. Therefore vertex $v$ is also next to the ocean, so the dual vertex $O$  corresponding the ocean must not be outside $C.$ But that would imply that $O$ belongs to the cycle, a contradiction.

The  edges of $T$ correspond to the non-chords, so $|E(T)|=|V(T)|-1=\nu$. The vertices of the tree correspond to the finite faces of $P$. We claim that if $v\in V(T)$ has degree at most $2$ in the tree, then the corresponding face contains at least $2$ unique vertices.  If a face is adjacent to one non-chord (it corresponds to a leaf in $T$) then it has at least two unique vertices since $G$ is simple. Suppose there is a face of $P$ adjacent to two non-chords, $ij$ and  $i'j'$, where we may assume that  $i<i'<j'<j$. As $G$ is simple and the  non-chords have odd length, we can conclude that $i'-i+j-j'\ge 4$, proving the claim.

Let $n_{\le d}$ (and $n_{\ge d}$) denote the number of vertices of $T$  having degree at most $d$ (at least $d$, resp.). We prove by induction on  $|V(T)|$ that  if $|V(T)|>1$ then $n_{\le 1}\ge n_{\ge 3}+2$ (if we delete a  leaf then either none of $n_{\le 1}$ and $n_{\ge 3}$ is changed, or $n_{\le  1}$ decreases by one and $n_{\ge 3}$ decreases by at most one).  Consequently $n_{\le 2}\ge n_{\ge 3}+2$. So
$$
\mu\ge 2n_{\le 2}\ge |V(T)|+2=|E(T)|+3=\nu+3.
$$
As $2t=\mu+2\nu$, we have $2t\le 3\mu -6$, consequently $\mu\ge 2t/3+2$, proving our first statement.

\medskip\noindent (ii) To prove the second statement we only need a simple calculation.
$$
t+(t/3+1)\le \mu/2+\nu+\mu/2=\mu+\nu\le n,
$$
so really $2t\le (3/2)(n-1)$.
\end{proof}

\smallskip\noindent Now we are ready to analyze the minimum size swap sequences. We start with the simplest case:
\begin{lemma}\label{lem:one-circuit1}
Assume that $G_1$ and $G_2$ are two realizations of the same degree  sequence, and $G$ is a \BrbG\ consisting of the edges in $E(G_1)\Delta E(G_2)$. Suppose that $E(G)$ is one alternating elementary circuit  $C$ of length $2t.$ Then there is a swap sequence of length $t-1$   between $G_1$ and $G_2$.
\end{lemma}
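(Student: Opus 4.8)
The plan is to induct on $t$. Note first that $t\ge 2$, since a length-two alternating circuit would require a red and a blue edge on the same pair of vertices, which a simple graph forbids. The base case $t=2$ is immediate: $C$ is an alternating $4$-cycle $v_0v_1v_2v_3$, and the single swap \eqref{eq:swap} that exchanges its two red edges for its two blue ones turns $G_1$ into $G_2$, i.e.\ uses exactly $t-1=1$ swap. For the inductive step I assume $t\ge 3$ and aim to perform one swap that strictly shrinks the associated red-blue graph, leaving an alternating configuration of half-length $t-1$ to which the induction hypothesis applies, for a total of $1+(t-2)=t-1$ swaps.

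The key move uses the elementary hypothesis. By part (ii) of the definition of elementary there is an edge $v_iv_{i+1}$ whose two endpoints are both unique; after re-indexing I take it to be $v_0v_1$, and after possibly interchanging the roles of $G_1$ and $G_2$ I take it to be red. Uniqueness of $v_0$ and $v_1$, together with simplicity, forces $v_0,v_1,v_2,v_3$ to be four distinct vertices, so they are eligible for a swap. The intended move removes the two red edges $v_0v_1,v_2v_3$ and inserts the blue edge $v_1v_2$ together with the chord $v_0v_3$ (this is \eqref{eq:swap} with $a=v_0,b=v_2,c=v_1,d=v_3$). Deleting the two reds and creating the blue $v_1v_2$ each remove one edge from the symmetric difference, while the new chord adds at most one; tracing the circuit then shows that what remains is the alternating walk $v_0,v_3,v_4,\ldots,v_{2t-1},v_0$, an alternating circuit of length $2(t-1)$ with $v_0v_3$ coloured red. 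If this shorter circuit happens not to be elementary, Lemma~\ref{th:even-distance} splits it at no swap cost into strictly smaller alternating circuits, to each of which the induction applies, and the swap counts still sum to at most $t-1$.

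The main obstacle is the validity of this swap. The move \eqref{eq:swap} is legal only if the two inserted pairs are currently non-edges. The blue edge $v_1v_2$ is automatically absent from the current graph, but the chord $v_0v_3$ may already be present — precisely when $v_0v_3\in E(G_1)\cap E(G_2)$, a common edge that lies outside the red-blue graph and so does not appear in $C$. This case genuinely occurs, even when $C$ is a simple cycle, so it must be handled. The remedy is to exploit the freedom in the fourth vertex: to resolve the red edge $v_0v_1$ and the blue edge $v_1v_2$ I may pair $v_0v_1$ with any present edge $v_2x$ for which $v_0x$ is a non-edge, and perform the swap deleting $v_0v_1,v_2x$ and inserting $v_1v_2,v_0x$. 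The preferred choice $x=v_3$ is blocked exactly in the bad case, and I will show that some other admissible $x$ (a neighbour of $v_2$ that is not a neighbour of $v_0$) then exists; otherwise $N(v_2)\subseteq N(v_0)\cup\{v_0,v_1\}$, a degree constraint that I can either rule out or circumvent by the symmetric move rerouting $v_0$ from $v_1$ toward $v_{2t-1}$, or by working from $G_2$ toward $G_1$ along the blue edges. Establishing that at least one such swap always exists and still lowers $|E(G_1)\Delta E(G_2)|$ by two is where the real work lies, and I expect simplicity together with the elementary structure (as already harnessed in the proof of Lemma~\ref{th:elemi}) to be the decisive tools.

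A final bookkeeping point makes the induction robust against these alternative swaps. In the alternative move a common edge $v_2x$ is momentarily recoloured blue, yet the net change to the symmetric difference is still $-2$, and the resulting red-blue graph is again a disjoint union of alternating circuits of total half-length $t-1$; whether it stays one circuit or splits, the induction delivers at most $t-1$ swaps in all. It is therefore cleanest to prove, by induction on $t$, the slightly more general statement that \emph{any} alternating circuit of length $2t$ in the red-blue graph of two realizations can be undone with at most $t-1$ swaps — of which the present lemma is exactly the single-circuit instance, consistent with $\mc_u(G)=1$ here.
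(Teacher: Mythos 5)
Your induction scheme, base case, and the swap in the case where the chord $v_0v_3$ lies in neither graph all coincide with the paper's argument; the gap is that the remaining case, $v_0v_3\in E(G_1)\cap E(G_2)$, which carries the whole content of the lemma, is left unresolved --- you say so yourself (``where the real work lies''), and the remedy you sketch does not close it. Two concrete problems. First, existence of your alternative fourth vertex can fail: take $E(G_1)=\{v_0v_1,\,v_2v_3,\,v_4v_5,\,v_0v_3\}$ and $E(G_2)=\{v_1v_2,\,v_3v_4,\,v_5v_0,\,v_0v_3\}$; these realize the same degree sequence, their symmetric difference is the elementary alternating circuit $v_0v_1v_2v_3v_4v_5$, and the chord $v_0v_3$ is a common edge, yet $N_{G_1}(v_2)=\{v_3\}\subseteq N_{G_1}(v_0)$, so at your chosen position no admissible $x$ exists: the ``degree constraint'' you hoped to rule out really occurs, and the fallbacks you list are precisely the unproven part. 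Second, your bookkeeping in the final paragraph is wrong: if $v_2x$ is a common edge and $v_0x$ is a non-edge of \emph{both} graphs, your swap deletes $v_0v_1$ and $v_1v_2$ from the symmetric difference but adds both $v_2x$ (now blue) and $v_0x$ (now red), a net change of $0$, not $-2$; you get $-2$ only in the lucky sub-case that $v_0x$ happens to be a blue edge. So even granting existence, the induction on the size of the difference can stall.

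The paper's resolution is unconditional and needs no search for a fourth vertex: in the blocked case, perform the swap in the \emph{stop} graph. If $v_0v_3\in E(G_1)\cap E(G_2)$, then in $G_2$ the pairs $v_1v_2$ and $v_3v_0$ are edges while $v_0v_1$ and $v_2v_3$ are non-edges (they are red), so the swap $v_1v_2,\,v_3v_0\Rightarrow v_0v_1,\,v_2v_3$ is legal in $G_2$; the very edge that blocked you is what makes it legal. After it the symmetric difference is again a single alternating circuit, $v_0,v_3,v_4,\ldots,v_{2t-1},v_0$, of length $2(t-1)$ with $v_0v_3$ now red, and it still contains the once-occurring vertex $v_0$. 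A swap applied to the stop graph costs the same as one applied to the start graph, because swaps are reversible: a $(t-2)$-swap sequence from $G_1$ to the modified stop graph, followed by the reverse of this swap, is a $(t-1)$-swap sequence from $G_1$ to $G_2$. This is the move you gesture at with ``working from $G_2$ toward $G_1$'' but never formulate; note that it uses the common edge itself as one of the two deleted edges, rather than resolving blue edges pairwise. Two incidental simplifications then fall out: only $v_0$ needs to occur once --- $v_1\ne v_3$ is automatic, since $v_1=v_3$ would force the pair $\{v_1,v_2\}$ to be simultaneously red and blue --- and since the two circuit edges at $v_0$ have different colors you may always orient the traversal so that $v_0v_1$ is red. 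Hence the invariant ``the circuit contains a vertex occurring exactly once'' is all the induction needs, it is visibly preserved in both cases, and your fallback via Lemma~\ref{th:even-distance} (which in any case splits only circuits having a repetition at even distance) becomes unnecessary.
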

\begin{proof}
Let us call $G_1$ the {\em start} and $G_2$ the {\em stop} graph. We apply induction on the size of the symmetric difference $|C|$ of the actual start and stop graphs. We may assume, that $v_0$ occurs exactly once in $C$ and the current $v_0v_1$ edge belongs to the start graph (since this circuit is elementary, due to Lemma \ref{th:elemi}, we can always renumbering the vertices accordingly).

When $t=2$ then the statement is clear, since $C$ is an alternating cycle of length four, so assume now that $t > 2$. Consider the chords $v_0v_1,\; v_1v_2,\; v_2v_3,\; v_3v_0.$ (Let's recall: by definition we have $v_0v_1, v_2v_3 \in E(G_1) \setminus E(G_2)$ and $v_1v_2 \in E(G_2)\setminus E(G_1)$ while $v_3v_0 \not \in E(G_1) \Delta E(G_2).$)

When chord $v_3v_0$ is non-edge in the start (and therefore in the stop) graph, then we can perform the  $v_0v_1, v_2v_3 \Rightarrow v_1v_2, v_3v_0$ swap in the start graph. After this operation the circuit will be shorter by two edges and remains elementary. So we can apply the inductive hypothesis for the new start/stop graph pair. If, however, the chord $v_3v_0$ is an edge both in the start and stop graphs, then we can carry out the  $ v_1v_2, v_3v_0 \Rightarrow   v_0v_1, v_2v_3 $ swap in the stop graph, still maintaining all the necessary properties. So we can proceed with the induction on the new start/stop graph pair.
\end{proof}

\begin{theorem}\label{th:SD-main1}
For all pairs of realizations $G_1, G_2$ of the same degree sequence, we have
\begin{equation}\label{eq:dist1}
\mathbf{dist}_u (G_1,G_2) = H'(G_1,G_2) - \mc_u(G_1,G_2).
\end{equation}
\end{theorem}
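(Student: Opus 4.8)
The plan is to prove the two inequalities $\mathbf{dist}_u(G_1,G_2) \le H'(G_1,G_2) - \mc_u(G_1,G_2)$ and $\mathbf{dist}_u(G_1,G_2) \ge H'(G_1,G_2) - \mc_u(G_1,G_2)$ separately. For the upper bound I would start from a maximum-size alternating circuit decomposition $C_1,\dots,C_h$ of the associated \BrbG\ $G$, so that $h=\mc_u(G_1,G_2)$ and, writing $|C_z|=2t_z$, we have $\sum_z t_z = H'(G_1,G_2)$. By Lemma~\ref{th:elemi} each $C_z$ is elementary, so Lemma~\ref{lem:one-circuit1} resolves $C_z$ with $t_z-1$ swaps. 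The point to check is that these single-circuit procedures run one after another without interference: in the proof of Lemma~\ref{lem:one-circuit1} every swap toggles three edges of the current circuit together with one chord $v_3v_0$, and such a chord is by definition not an edge of $E(G_1)\Delta E(G_2)$, hence not an edge of any other $C_{z'}$. Thus handling $C_z$ restores every chord it touches and leaves all other circuits' edges unchanged, so after processing all $h$ circuits we have transformed $G_1$ into $G_2$ using $\sum_z(t_z-1)=H'-\mc_u$ swaps.

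For the lower bound I would use a potential argument. Set $\Phi(H)=H'(H,G_2)-\mc_u(H,G_2)$ for any realization $H$ of the common degree sequence; then $\Phi(G_2)=0$, and it suffices to prove that one swap decreases $\Phi$ by at most $1$, since telescoping along any swap sequence $G_1=H_0,\dots,H_m=G_2$ then gives $m\ge \Phi(G_1)$. Let $R$ be the \BrbG\ of $H$ and $G_2$ and $R'$ that of $H'$ and $G_2$, where $E(H')=E(H)\setminus\{ac,bd\}\cup\{bc,ad\}$. Since $E(H)\Delta E(H')=\{ac,bc,bd,ad\}=:Q$, the graphs $R$ and $R'$ differ only on the four positions of $Q$. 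Writing $A=R\cap Q$ and $B=R'\cap Q$, a short check shows that each position lies in exactly one of $A,B$ (its status being forced by membership in $G_2$), so $A$ and $B$ partition $Q$, the graphs agree off $Q$, and $H'(H',G_2)-H'(H,G_2)=|B|-2$. Hence, with $p:=|B|$, the bound $\Phi(H)-\Phi(H')\le 1$ reduces exactly to the claim $\mc_u(R')\le \mc_u(R)+p-1$.

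To prove this claim I would take a maximum decomposition of $R'$ into $k=\mc_u(R')$ alternating circuits and split off the ones containing an edge of $B$; since each of the $p$ edges of $B$ lies in a single circuit, there are at most $j\le p$ such affected circuits. The remaining circuits use no edge of $B$, hence only edges outside $Q$, which are common to $R$ and $R'$; so they are still alternating circuits of $R$. Deleting their edge set $U$ from $R$ leaves $R\setminus U$, which is balanced because both $R$ and $U$ are, so by Proposition~\ref{prop:euler} it decomposes into at least one further alternating circuit whenever it is nonempty. This yields $\mc_u(R)\ge(k-j)+1\ge k-p+1$. The only degenerate case, $R\setminus U=\emptyset$, forces $A=\emptyset$ and the edges of $B$ to form a single alternating $4$-cycle (so $j=1$, $p=4$), where $\mc_u(R)\ge k-1\ge k-p+1$ holds directly; thus the claim holds in every case.

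Combining the two directions gives the identity. I expect the genuinely delicate step to be controlling the change of $\mc_u$ under a single swap, i.e.\ the claim $\mc_u(R')\le \mc_u(R)+p-1$: a priori a local toggle on four vertices could reorganize many circuits at once, and the whole argument hinges on the observation that the untouched circuits survive intact and that $R\setminus U$ is again balanced, so that Euler decomposability (Proposition~\ref{prop:euler}) recovers the missing circuit. Everything else is bookkeeping built on Lemmas~\ref{th:elemi} and~\ref{lem:one-circuit1}.
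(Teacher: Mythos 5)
Your proof is correct and follows the paper's two-sided strategy, but with one faulty justification and one genuine improvement worth recording. The upper bound is exactly the paper's argument: a maximum decomposition $C_1,\dots,C_h$, intermediate realizations $H_i$ with $E(H_{i+1})=E(H_i)\,\Delta\, C_{i+1}$, and Lemma~\ref{lem:one-circuit1} applied to each pair. However, your non-interference claim is wrong as stated: the chord $v_3v_0$ used inside Lemma~\ref{lem:one-circuit1} is only guaranteed to lie outside the \emph{current pair's} symmetric difference $E(H_i)\,\Delta\,E(H_{i+1})=C_z$; it may perfectly well be an edge of $E(G_1)\,\Delta\,E(G_2)$ belonging to a different circuit $C_{z'}$, since the circuits of the decomposition are edge-disjoint but not vertex-disjoint. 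The correct (and simpler) reason nothing goes wrong is that the lemma is applied as a black box to $(H_i,H_{i+1})$: its procedure handles both possible states of every chord it probes, and it terminates at exactly $H_{i+1}$, so any edge of another circuit that gets toggled in mid-phase is restored by the end of the phase. So the conclusion you need is true, but not for the reason you gave.

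Your lower bound is where you genuinely depart from the paper, to your advantage. The paper fixes a \emph{minimum} swap sequence and proves by induction that $\mc_u(G_1,H_i)\ge H'(G_1,H_i)-i$, splitting into cases according to $\ell=|S\cap\Delta_i|$; in the case $\ell>1$ it simply asserts that the affected circuits ``meld'' and that this decreases the circuit count by at most $\ell-1$, with no argument for why the melded part cannot vanish or collapse further. Your potential $\Phi(H)=H'(H,G_2)-\mc_u(H,G_2)$, anchored at $G_2$ and telescoped over an \emph{arbitrary} sequence, is the mirror image of that induction (your $p$ is the paper's $4-\ell$), but your proof of the key claim $\mc_u(R')\le\mc_u(R)+p-1$ supplies precisely the missing rigor: the circuits of a maximum decomposition of $R'$ avoiding $B$ survive verbatim in $R$, the leftover $R\setminus U$ is again balanced so Proposition~\ref{prop:euler} yields at least one further circuit, and the degenerate case $R=U$ forces the affected part of $R'$ to be a single alternating $4$-cycle, so $j=1$ there. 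This Euler-decomposition argument is tighter than what the paper writes down, and it also removes the paper's (inessential) reliance on minimality of the swap sequence.
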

\begin{proof}
(i/a) The inequality LHS $\le$ RHS is a simple application of Lemma
\ref{lem:one-circuit1}: take a maximal alternating circuit decomposition
$C_1,...,C_k$ where $k=\mc_u(G_1,G_2)$, and define realizations
$G_1=H_0,H_1,\ldots, H_{k-1}, H_k=G_2$ such that for all $i=0,\ldots, k-1$ realizations $H_i$ and $H_{i+1}$ differ exactly in $C_i.$ Then by Lemma \ref{th:elemi} all circuits are elementary, so the application of Lemma \ref{lem:one-circuit1} for each pair $H_i,H_{i+1}$ proves this inequality.

\medskip\noindent
(i/b) One can find a recursive proof as well. This is based on the following easy observation: assume that the shortest circuit $C_1$ in the previous maximal decomposition has the shortest length among all circuits in all possible maximal circuit decomposition. Then
\begin{lemma}\label{lm:minimal}
There exists no edge in any of the other circuits which would divide $C_1$ into two odd length trails.
\end{lemma}
\medskip\noindent{\em Proof.}
Assume the opposite: the chord $v_1,v_{2\ell}$ of $C_1$ is an edge in $C_2.$ Then this edge together with one of the trails of $C_1$ form a shorter circuit, than $C_1$ while the other trail together with the remaining part of $C_2$ form another alternating circuit. So we constructed another circuit decomposition with the same number of circuits, but with a shorter shortest circuit, a contradiction. (It is still possible that a chord in $C_1$ belongs to another circuit as well -- but this divides $C_1$ into two even-length trails. However this will not cause any problem.) \hfill $\Box_{\ref{lm:minimal}}$

\medskip
Now we can operate as follows: consider the (actual) symmetric difference, find a maximal circuit decomposition with a shortest elementary circuit. Apply the procedure in Lemma \ref{lem:one-circuit1} for this circuit (by Lemma \ref{lm:minimal} we can do it). Repeat the whole process with the new (and smaller) symmetric difference.

\medskip\noindent
(ii) We finish the proof of Theorem \ref{th:SD-main1} by proving that LHS $\ge RHS.$ We realign (\ref{eq:dist1}) into
$$
\mc_u(G_1,G_2) \ge H'(G_1,G_2) - \mathbf{dist}_u(G_1,G_2).
$$
Assume that the sequence $G_1=H_0,H_1,\ldots, H_{k-1}, H_k=G_2$ describe a minimum length realization sequence from $G_1$ to $G_2$ where for each $i=0,\ldots, k-1$, the graphs $H_i$ and $H_{i+1}$ are in swap-distance 1. It is clear that  any consecutive  swap subsequence from $H_i$ to $H_j$ must be also a minimum one. For each $i$ we use the notation
$$\Delta_i :=E_1 \Delta E(H_i) .$$

We are going to construct a circuit decomposition of $E_1\Delta E_2 = \Delta_k$ into  $\ge H'(G_1,G_2) - \mathbf{dist}_u (G_1,G_2)$ alternating circuits. By part (i) it will prove also that the two sides are actually equal (otherwise the swap sequence cannot be minimum). We proceed with induction: we will show that for all $i=0,\ldots,k$ we have
\begin{equation}\label{eq:induct1}
\mc_u(G_1,H_i) \ge H'(G_1, H_i) - \mathbf{dist}_u (G_1,H_i).
\end{equation}
In case of $i=0$ this is clearly true, if $i=k$ then the main statement is proved.  Now we assume (\ref{eq:induct1}) for subscript $i$ and we are going to prove it for $i+1.$ By the hypothesis we know that $ dist_u(G_1, H_i) = i.$ We are going to distinguish cases upon the relations among $E(H_i) \Delta E(H_{i+1})=S$ and $\Delta_i.$

Assume at first that $| S \cap \Delta_i | = 0.$ Then the number of circuits in the decomposition of $\Delta_{i+1}$ is increased by one (comparing to the maximum decomposition of $\Delta_i$), the number of edges is increased by four, finally the number of swaps is increased by one again. Inequality (\ref{eq:induct1}) is maintained.

Now assume that $|S \cap \Delta_i| =\ell > 0.$ Since $S$ is derived from the swap transforming $H_i$ into $H_{i+1}$ therefore the two existing edges among the four chords defining $S$ are edges in $H_i$ and not edges in $H_{i+1}$  and the analogous statement is true for the two missing edges. Therefore the chords in $S \cap \Delta_i$ are in the same states  in $H_0$ and in $H_{i+1}.$ Then
\begin{enumerate}[{\rm (a)}]
\item if $\ell=1$ then this chord does not belong to $\Delta_{i+1}$ therefore the other three chords of $S$ extend the original circuit. Therefore the number of circuits is the same as before, while $|\Delta_{i+1}| = |\Delta_i|+2$ and the number of necessary swaps is increased by one. Inequality (\ref{eq:induct1}) is maintained;
\item if $\ell > 1$ then the $\ell$ common chords can be in at most $\ell$ circuits. It can happen, that some circuits meld into a smaller number of circuits after the swap on $S$ is performed, but this can decrease the number of circuits with at most $\ell -1.$ Furthermore $|\Delta_{i+1}| = |\Delta_i|+4 - 2\ell,$ finally the number of necessary swaps increased by one. Inequality (\ref{eq:induct1}) is maintained.
\end{enumerate}
The proof Theorem \ref{th:SD-main1} is finished.
\end{proof}
\noindent
As it was already mentioned the value $\mc_u$ seems to be not efficiently computable. Therefore Theorem \ref{th:SD-main1} does not help directly to find a shortest swap sequence between two particular realizations. However good upper and lower bounds on this value may be useful. It is clear, however, that these bounds depend not only on the number of the edges (which is  $|E|$)  in one realization  but on the size of the symmetric difference. When $|E|$ is small, say $|E| \le \frac{1}{2} \binom{n}{2}$ then the size of the symmetric difference can be as big as $2|E|.$ If $|E|$ is much higher then the symmetric difference becomes small.

Assume now, that the graphical degree sequence under investigation is $(1,1,1, \ldots,1).$ All realizations are perfect matchings, and if two of them form one alternating Eulerian cycle, then the actual swap-distance, by Theorem \ref{th:SD-main1} is $|E|-1.$ This is just the half of the old estimation. The consequence is that the diameter of the corresponding Markov chain can be as big as $|E|-1.$

Next we give  a general bound on the swap-distance (which is in some sense sharp), and
then we formulate some conjectures.  For a given degree sequence $\bd=\{d_1,d_2, \ldots,d_n\}$ let $m$ denote $(\sum d_i)/2$, the number of edges in any realization, and let $m^*$ denote $\sum\min(d_i, n-d_i)$, an upper bound on the number of edges in a \BrbG\ associated with two realizations $G_1$ and $G_2$.
\begin{theorem}\label{th:undir_bound}
For all pairs of realizations $G_1, G_2$ of the same degree sequence of length
$n$, we have
\begin{eqnarray*}
\mathbf{dist}_u (G_1,G_2) &\le&  H'(G_1,G_2)\cdot \left (1- \frac{4}{3n}\right ) \\ &\le&
m^*\left (\frac{1}{2}-\frac{2}{3n} \right )\le m\left (1-\frac{4}{3n}\right ).
\end{eqnarray*}
\end{theorem}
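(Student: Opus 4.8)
The plan is to verify the three inequalities of the chain one at a time, since each is an independent reduction to a fact that is either already available or a short counting argument. Throughout write $G$ for the \BrbG\ associated with $G_1,G_2$; since the red edges ($E(G_1)\setminus E(G_2)$) and the blue edges ($E(G_2)\setminus E(G_1)$) are equinumerous, each class has exactly $H'(G_1,G_2)$ members and $|E(G)|=2H'(G_1,G_2)$.

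First I would establish the leftmost inequality $\mathbf{dist}_u(G_1,G_2)\le H'(G_1,G_2)\bigl(1-\tfrac{4}{3n}\bigr)$. By Theorem \ref{th:SD-main1} the left-hand side equals $H'(G_1,G_2)-\mc_u(G_1,G_2)$, so the inequality is equivalent to $\mc_u(G_1,G_2)\ge\tfrac{4}{3n}H'(G_1,G_2)$. Substituting $|E(G)|=2H'(G_1,G_2)$ into the bound $\mc_u(G)\ge\bigl\lceil\tfrac{2|E(G)|}{3n}\bigr\rceil$ of Theorem \ref{th:elemi_strong}(ii) gives precisely $\mc_u(G_1,G_2)\ge\tfrac{4H'(G_1,G_2)}{3n}$, as required. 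This link holds for every $n$.

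The remaining two links both reduce, after cancelling the common factor $1-\tfrac{4}{3n}$ (positive for $n\ge 2$, while $n=1$ is trivial), to elementary edge-count inequalities. The middle inequality reduces to $2H'(G_1,G_2)\le m^*$, i.e.\ $|E(G)|\le m^*$, and this is the one step needing a genuine argument---the main obstacle. Here I would bound the red degree $d_r(v_i)$ of each vertex. Let $c_i$ be the number of edges incident to $v_i$ lying in both $E(G_1)$ and $E(G_2)$; then $d_r(v_i)=d_i-c_i\le d_i$. Moreover the $c_i$ common neighbours, the $d_i-c_i$ red-only neighbours and the $d_i-c_i$ blue-only neighbours of $v_i$ are pairwise distinct and all different from $v_i$, so $2d_i-c_i\le n-1$, whence $d_r(v_i)=d_i-c_i\le n-1-d_i\le n-d_i$. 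The two estimates together give $d_r(v_i)\le\min(d_i,n-d_i)$, and summing over $i$ via the handshake identity on the red subgraph yields $2H'(G_1,G_2)=\sum_i d_r(v_i)\le\sum_i\min(d_i,n-d_i)=m^*$.

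Finally, the rightmost inequality reduces to $m^*\le 2m$, immediate from $\min(d_i,n-d_i)\le d_i$ summed over $i$ together with $\sum_i d_i=2m$. Reinstating the positive factor $1-\tfrac{4}{3n}$ in the last two steps closes the chain. I expect no difficulties beyond the distinctness-of-neighbours count in the middle step; the only bookkeeping to watch is the sign of $1-\tfrac{4}{3n}$ when multiplying through (covered by $n\ge 2$) and the correct separation of red, blue and common edges at each vertex.
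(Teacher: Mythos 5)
Your proof is correct and takes essentially the same route as the paper: the first inequality comes from Theorem \ref{th:SD-main1} combined with the bound $\mc_u(G)\ge 2|E(G)|/(3n)$ of Theorem \ref{th:elemi_strong}(ii), and the remaining links come from the chain $H'(G_1,G_2)\le m^*/2\le m$. The only difference is that you spell out the neighbour-counting argument for $2H'(G_1,G_2)\le m^*$, which the paper merely asserts as a ``simple fact''.
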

\begin{proof}
It is a simple calculation using Theorem \ref{th:elemi_strong} and the simple
fact, that $H'(G_1,G_2)\le m^*/2\le m$.
\end{proof}
\begin{conj}
Let $G$ be a \BrbG\ with $n$ vertices and $m$ edges. Then {\rm (i)} there exists an alternating circuit of length at most $3n^2/m$. And {\rm (ii)} $\mc_u(G)\ge m^2/(6n^2)$.
\end{conj}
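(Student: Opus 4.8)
The plan is to establish part (i) and then obtain part (ii) from it by a greedy peeling argument; the two halves dovetail with exactly the constants appearing in the conjecture, so I treat the reduction first, as it is the part I am confident about.

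Assume (i) and run the following process on $G$: repeatedly locate a shortest alternating circuit, delete its edges, and continue until nothing remains. Deleting an alternating circuit leaves a \BrbG\ (at each visited vertex it removes one red and one blue incident edge, so balance is preserved), and it never increases the vertex count, so (i) applies at every stage with the original $n$. Writing $m_i$ for the number of edges present before the $i$-th deletion and $\ell_i\le 3n^2/m_i$ for the length of the circuit removed, we have $m_{i+1}=m_i-\ell_i$, and therefore
\[
m_i^2-m_{i+1}^2=\ell_i\,(m_i+m_{i+1})\le 2m_i\ell_i\le 2m_i\cdot\frac{3n^2}{m_i}=6n^2 .
\]
Each deletion thus lowers the potential $m_i^2$ by at most $6n^2$. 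Since the potential drops from $m^2$ to $0$, the number $k$ of circuits produced satisfies $k\ge m^2/(6n^2)$, and as $\mc_u(G)$ is the size of a \emph{maximum} decomposition we conclude $\mc_u(G)\ge k\ge m^2/(6n^2)$, which is exactly (ii).

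For (i) I would split on the density. When $m\le 2n$ the statement is free: by Theorem~\ref{th:elemi_strong}(ii) a maximum-size decomposition contains a circuit of length at most $\tfrac32(n-1)<\tfrac32 n\le 3n^2/m$, so the shortest alternating circuit already meets the bound. The real content is the dense range $m>2n$, where I would count bichromatic cherries. Each vertex $v$ is the centre of exactly $d_r(v)d_b(v)=\delta_v^2$ paths consisting of a red edge $uv$ followed by a blue edge $vw$; since $\sum_v\delta_v=m$, Cauchy--Schwarz yields at least $m^2/n$ such cherries. Orienting every cherry from its red endpoint $u$ to its blue endpoint $w$ produces a directed multigraph $D$ on the same $n$ vertices with at least $m^2/n$ arcs. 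Consecutive arcs of $D$ meet red-end to blue-end, so a closed directed walk of length $\ell$ in $D$ lifts to a \emph{colour-alternating} closed walk of length $2\ell$ in $G$; a shortest directed cycle then lifts, after discarding any repeated edges, to an alternating circuit of length at most $3n^2/m$. Consequently it suffices to find a directed cycle of length at most $3n^2/(2m)$ in $D$.

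The hard part is exactly this last step, and it is the reason the statement is only a conjecture. The digraph $D$ has average out-degree at least $m^2/n^2$, and for $m>2n$ the Caccetta--H\"aggkvist bound ``out-degree $\ge r$ forces a directed cycle of length $\le n/r$'' would give a cycle of length $\le n^3/m^2\le 3n^2/(2m)$, which is more than enough. But three genuine obstacles stand in the way: the Caccetta--H\"aggkvist conjecture itself is open (only partial results of Chv\'atal--Szemer\'edi / Shen type, carrying additive error terms and minimum-degree hypotheses, are available); we control only the \emph{average} out-degree of $D$, not its minimum; and the $\ge m^2/n$ arcs may be concentrated in high-multiplicity parallel classes, which inflate the arc count without shortening the directed girth (two cherries $u,v,w$ and $u,v',w$ sharing both endpoints produce a red--blue--blue--red quadrilateral, not an alternating one, so parallel arcs are useless). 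Overcoming these would require either a Caccetta--H\"aggkvist-strength input tailored to the digraphs arising from balanced colourings, or an entirely different, spectral or BFS-layered, estimate of the alternating girth that pins down the constant $3$; I regard this as the main difficulty.
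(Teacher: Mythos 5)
The statement you were handed is one of the paper's \emph{conjectures}: the paper offers no proof of it at all (it only remarks that such bounds would yield useful estimates on the swap-distance), so there is no ``paper proof'' to compare against, and you cannot be faulted for failing to close it. Judged on its own terms, the parts of your proposal that you do complete are sound. The peeling reduction of (ii) to (i) is correct: an alternating circuit uses one red and one blue edge at each visit of a vertex, so deleting it preserves balance, and the potential computation $m_i^2-m_{i+1}^2=\ell_i(m_i+m_{i+1})\le 2m_i\cdot 3n^2/m_i = 6n^2$ is exactly the accounting that explains why the constant in (ii) is $6=2\cdot 3$; since the greedy decomposition is a valid alternating circuit decomposition, $\mc_u(G)\ge k\ge m^2/(6n^2)$ follows. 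Likewise your observation that Theorem \ref{th:elemi_strong}(ii) settles (i) in the range $m\le 2n$ (there $3n^2/m\ge 3n/2 > (3/2)(n-1)$) is correct.

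The genuine gap is the one you name yourself: part (i) in the dense range $m>2n$. Your cherry-digraph reduction does not close it, for precisely the reasons you list -- the Caccetta--H\"aggkvist conjecture is open, you control only the \emph{average} out-degree of $D$, and parallel arcs inflate the arc count without shortening the directed girth. One further caveat you should record: even granting a short directed cycle in $D$, the lift is only a closed alternating \emph{walk} in $G$; the cherry centres $v_i$ may coincide with each other or with the $u_j$, and a red edge of one cherry may equal a red edge of another, so ``discarding repeated edges'' while preserving alternation is not automatic and would need its own argument (say, an induction on repeated edges in the spirit of Lemma \ref{th:even-distance}). In summary, you have genuinely proved the implication (i) $\Rightarrow$ (ii) and the sparse half of (i), which is as much as can reasonably be expected; completing the dense case of (i) would constitute new research beyond what the paper itself achieves.
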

\noindent Such upper bound would provide a lower bound on the distance, and thus could be useful in practical applications.
\begin{conj}
For a degree sequence
$\bd=\{d_1,d_2,\ldots,d_n\}$ let $m$ again denote $(\sum d_i)/2$, the number of
edges in any realization, and let $m^*$ denote $\sum\min(d_i, n-d_i)$.
Then we conjecture the following statements. The listed inequalities arisen
\begin{enumerate}[{\rm (i)}]
\item $\mathbf{dist}_u (G_1,G_2) \le H'(G_1,G_2)\cdot(1-m/(3n^2))$.
\item $\mathbf{dist}_u (G_1,G_2) \le m^*(1/2-m/(6n^2))$.
\item $\mathbf{dist}_u (G_1,G_2) \le m(1-m/(3n^2))$.
\item $\mathbf{dist}_u (G_1,G_2) \le 5n^2/24$.
\end{enumerate}
\end{conj}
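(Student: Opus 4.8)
The plan is to route every inequality through Theorem~\ref{th:SD-main1}, which converts any upper bound on $\mathbf{dist}_u$ into a lower bound on $\mc_u$. Write $M:=|E(G_1)\Delta E(G_2)|=2H'(G_1,G_2)$ for the number of edges of the associated balanced red-blue graph $G$; then Theorem~\ref{th:SD-main1} reads $\mathbf{dist}_u(G_1,G_2)=H'(G_1,G_2)-\mc_u(G)$, so everything reduces to lower-bounding $\mc_u(G)$. I would first clear away the trivial implications. Since $H'\le m^*/2\le m$ and the factor $1-m/(3n^2)$ is nonnegative for every graphical sequence (as $m\le\binom n2$), inequality (i) forces (ii), and (ii) forces (iii). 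Hence the real content lies in (i) and in the sharper bound (iv); the latter is genuinely stronger, as the right-hand side of (iii) is maximized near $5n^2/12$ and so cannot by itself deliver $5n^2/24$.

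I would obtain (iv) by feeding the quadratic estimate $\mc_u(G)\ge M^2/(6n^2)$ of the preceding conjecture (there ``$m$'' is the present $M$) into Theorem~\ref{th:SD-main1}:
\[
\mathbf{dist}_u(G_1,G_2)=\frac M2-\mc_u(G)\le\frac M2-\frac{M^2}{6n^2}.
\]
This is a downward parabola in $M$ with unconstrained maximum at $M=3n^2/2$, whereas the feasible range obeys $M\le m^*=\sum_i\min(d_i,n-d_i)\le n\cdot\tfrac n2=\tfrac{n^2}2$. The maximum over the feasible interval is therefore attained at the endpoint $M=n^2/2$, giving $\tfrac{n^2}4-\tfrac{n^2}{24}=\tfrac{5n^2}{24}$, which is exactly (iv). Note that the quadratic bound is essential here: the linear estimate $\mc_u\ge 2M/(3n)$ of Theorem~\ref{th:elemi_strong} only yields $\approx n^2/4=6n^2/24$.

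For (i) I would split into two regimes according to density. When $m\le 4n$ the linear bound $\mc_u(G)\ge 2M/(3n)=4H'/(3n)$ of Theorem~\ref{th:elemi_strong} already dominates $H'm/(3n^2)$, which is precisely the rearrangement of (i). When $m>4n$ I would instead prove a density-refined form of part~(i) of the preceding conjecture: every circuit in a maximum-size decomposition has length at most $6n^2/m$. Summing over the decomposition then gives $\mc_u(G)\ge Mm/(6n^2)=H'm/(3n^2)$, and substituting into Theorem~\ref{th:SD-main1} produces $\mathbf{dist}_u\le H'(1-m/(3n^2))$, i.e.\ (i). The heuristic is that the supply of chords available for shortening a long circuit grows with the ambient density $m$, so circuits cannot stay long.

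The hard part will be precisely this density-dependent circuit-length bound, equivalently the preceding conjecture. Theorem~\ref{th:elemi_strong} exploits only simplicity, reading off a linear bound $2t\le\tfrac32(n-1)$ from the crossing-free tree of non-chords; reaching $O(n^2/M)$ (or $O(n^2/m)$) requires quantifying how the $M$ red-blue edges---or the $m$ ambient edges---create abundant splitting opportunities. I would try to show that a sufficiently long alternating circuit $C$ must contain two equal-parity positions $v_i,v_j$ that are joined by an edge belonging to another circuit of the decomposition; rerouting $C$ through this edge (a meld-and-resplit in the spirit of Lemma~\ref{lm:minimal}, combined with the even-distance split of Lemma~\ref{th:even-distance}) would then strictly increase the number of circuits, contradicting maximality once the length exceeds the claimed threshold. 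Converting this picture into a counting argument that yields the exact constants $6$ and $5/24$, rather than a weaker polynomial gain, is where I expect the genuine difficulty to reside.
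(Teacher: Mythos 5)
First, a point of orientation: the statement you set out to prove is labelled a \emph{Conjecture} in the paper, which offers no proof of it, so your attempt has to stand entirely on its own --- and it does not. What is sound in it: Theorem~\ref{th:SD-main1} does reduce everything to lower bounds on $\mc_u$; the implications (i)$\Rightarrow$(ii)$\Rightarrow$(iii) are correct (since $H'\le m^*/2\le m$ and $1-m/(3n^2)\ge 0$); your sparse-regime step is valid and does establish (i) unconditionally whenever $m\le 4n$; and your derivation of (iv) is arithmetically right, since $\mc_u(G)\ge |E(G)|^2/(6n^2)$ together with $|E(G)|\le m^*\le n^2/2$ and the monotonicity of $M/2-M^2/(6n^2)$ for $M\le 3n^2/2$ gives exactly $5n^2/24$. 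But that input is verbatim part~(ii) of the paper's \emph{preceding} conjecture, which is equally open; so for (ii)--(iv) you have produced a conditional reduction between two conjectures, not a proof.

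The genuine gap is the one new ingredient you propose for the dense regime: the lemma that every circuit in a maximum-size decomposition has length at most $6n^2/m$, with $m$ the \emph{ambient} edge count. This is false, and for a structural reason: a maximum alternating circuit decomposition, and hence $\mc_u$, depends only on the red-blue graph $E(G_1)\Delta E(G_2)$; edges of $G_1$ outside the symmetric difference are invisible to it, and, by the equality of Theorem~\ref{th:SD-main1}, to $\mathbf{dist}_u$ as well, so no bound on circuit lengths can involve the ambient density. Concretely, take $G_1=K_{n/2,n/2}$, so $m=n^2/4$, and choose $2t$ distinct vertices $a_1,b_1,b_2,a_2,a_3,b_3,b_4,a_4,\dots$ forming a closed walk alternating edges of $G_1$ (cross-class pairs) with non-edges (within-class pairs); let $G_2=G_1\Delta C$. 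The symmetric difference is the single alternating $2t$-cycle $C$; since the only nonempty edge subset of a cycle with all degrees even is the whole cycle, the unique (hence maximum) decomposition is $\{C\}$, so $\mc_u(G_1,G_2)=1$ and $\mathbf{dist}_u(G_1,G_2)=t-1$, where $t$ can be as large as roughly $n/2$, while $6n^2/m=24$. This kills your lemma; worse, the same pair violates item (i) in its literal reading, because $t-1\le t\bigl(1-m/(3n^2)\bigr)$ forces $t\le 3n^2/m=12$. So either (i) is false as printed, or --- as comparison with the preceding conjecture strongly suggests, since $\mc_u\ge|E(G)|^2/(6n^2)$ yields precisely $\mathbf{dist}_u\le H'\cdot\bigl(1-|E(G)|/(3n^2)\bigr)$ --- the ``$m$'' in these factors is meant to be the edge count of the red-blue graph rather than of a realization, in which case your two-regime split and its heuristic (``more ambient edges supply more shortening chords'') address the wrong quantity altogether. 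Either way, no item of the conjecture is unconditionally established by your argument, and you should flag the counterexample above, since it bears on the correct formulation of the conjecture itself.
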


\section{Undirected bipartite degree sequences}\label{sec:bi}

It is easy to see that for bipartite degree sequences Theorem
\ref{th:SD-main1} applies without any changes (note that in the proof we only
used chords of odd length, so they are also chords in the bipartite case).
Even more, since there is no
odd cycle in a bipartite graph, the circuits in the maximal size alternating
circuit decomposition of the symmetric difference of two realizations are
cycles. As a consequence, for two realizations $B_1$ and $B_2$ of a bipartite
degree sequence, we can interpret $\mc_u(B_1,B_2)$ as the maximum number of
cycles in a decomposition into alternating cycles (which always exists)
of the associated \BrbG\ $B$.
However, we think that even for bipartite realizations the determination of
$\mc_u$ might be hard.

Let $\bbd=\big( (a_1, \ldots,a_k ),  (b_1,\ldots ,b_\ell  )\big )$ be a given bipartite degree sequence, we assume $\ell\le k$. Let $n=k+\ell,\; n'=2\ell,\; m=\sum a_i$, and  let $m^*$ denote $2\sum\min(a_i, \ell-a_i)$, an upper bound on the number of edges in a \BrbG\ associated with two realizations $B_1$ and $B_2$. Using that any alternating cycle has length at most $n'$, similarly to Theorem \ref{th:undir_bound}, we get the following.

\begin{theorem}\label{th:bip_bound}
For all pairs of realizations $B_1, B_2$ of the same degree sequence
$\bbd$ we have
\begin{eqnarray*}
\mathbf{dist}_u (B_1,B_2) &\le&  H'(B_1,B_2)\cdot(1-2/n')\\
&\le& m^*(1/2-1/n')\le m(1-2/n'). \qquad \Box
\end{eqnarray*}
\end{theorem}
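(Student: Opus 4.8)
The plan is to combine the exact identity of Theorem~\ref{th:SD-main1} with the two bipartite-specific facts announced just before the statement: that every cycle in a maximum alternating cycle decomposition has length at most $n'=2\ell$, and that $m^*$ bounds the number of edges of the associated \BrbG. Everything then reduces to the same elementary algebra used in Theorem~\ref{th:undir_bound}, with the crude circuit-length bound $(3/2)(n-1)$ replaced by the sharper bipartite value $n'$.

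First I would invoke Theorem~\ref{th:SD-main1}, which for bipartite realizations reads $\SD_u(B_1,B_2)=H'(B_1,B_2)-\mc_u(B_1,B_2)$; since, as noted in this section, the maximum decomposition of the symmetric difference consists of genuine alternating \emph{cycles}, it suffices to lower-bound the number of such cycles. An alternating cycle is simple and alternates between the two classes, so it uses equally many vertices from $U$ and from $W$; as $\ell\le k$ the smaller class $W$ caps this at $\ell$ vertices per side, whence each cycle has length at most $2\ell=n'$. Because the associated \BrbG\ $B$ has exactly $2H'(B_1,B_2)$ edges, partitioning them into cycles of length $\le n'$ forces $\mc_u(B_1,B_2)\ge 2H'(B_1,B_2)/n'$. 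Substituting into the identity gives the first inequality $\SD_u(B_1,B_2)\le H'(B_1,B_2)(1-2/n')$.

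For the middle inequality I would establish $H'(B_1,B_2)\le m^*/2$, i.e.\ that $B$ has at most $m^*$ edges. Fix $u_i\in U$; in $B$ its red edges lie among the $a_i$ edges present at $u_i$ and its blue edges among the $\ell-a_i$ absent chords, so $d_r(u_i)\le a_i$ and $d_b(u_i)\le\ell-a_i$. Balancedness gives $d_r(u_i)=d_b(u_i)\le\min(a_i,\ell-a_i)$, hence $d_B(u_i)\le 2\min(a_i,\ell-a_i)$. Summing over $U$ (each edge of the bipartite graph $B$ has a unique endpoint in $U$) yields $|E(B)|\le 2\sum_i\min(a_i,\ell-a_i)=m^*$. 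Since $n'\ge2$ the factor $1-2/n'$ is nonnegative, so monotonicity turns $H'\le m^*/2$ into $H'(1-2/n')\le (m^*/2)(1-2/n')=m^*(1/2-1/n')$. The final inequality follows from $\min(a_i,\ell-a_i)\le a_i$, i.e.\ $m^*\le 2m$, again combined with $1/2-1/n'\ge0$.

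I do not expect a genuine obstacle here; the only two points requiring a moment's care are the geometric cap $n'=2\ell$ on cycle length — which is exactly where the hypothesis $\ell\le k$ enters — and the per-vertex balanced-degree computation giving $|E(B)|\le m^*$. Both are straightforward and mirror the undirected template of Theorem~\ref{th:undir_bound}.
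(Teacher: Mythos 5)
Your proposal is correct and follows essentially the same route as the paper, which derives the bound from the identity of Theorem~\ref{th:SD-main1} together with the cycle-length cap $n'=2\ell$ and the chain $H'(B_1,B_2)\le m^*/2\le m$, ``similarly to Theorem~\ref{th:undir_bound}.'' You have merely filled in the details the paper leaves as a simple calculation (the per-vertex balanced-degree argument for $|E(B)|\le m^*$ and the nonnegativity of $1-2/n'$), all of which check out.
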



\section{Directed degree sequences}\label{sec:dir}

In this section we discuss directed degree sequences.  We will apply the machinery of Section \ref{sec:bi} to solve the directed degree sequence problem, using the bipartite graph $B({\vec G})$ defined in Section \ref{sec:def}. However doing so we may face a serious problem: since no loop is allowed in $\vec G$, we cannot use edges of form $u_xw_x$ in the process. Recall that these pairs are called non-chords.  So at first we are going to analyze the alternating cycles we have to handle along the process.

Let $\vec G$ be a directed \BrbG\ associated with two realizations $\vec G_1$ and $\vec G_2$ of the same directed degree sequence, let $B=B({\vec G})$ be the corresponding bipartite \BrbG, and let $\vec C$ be an alternating circuit in $\vec G$ (recall, that $C$ denotes the corresponding alternating cycle in $B$). In this section we will mainly use the terminology about the bipartite representation $B$, but, where it is interesting, we
remark in italics and in parenthesis the corresponding notions in the original directed graph.

\medskip\noindent {\bf Case 1}: Let us start with the case, when there exists a vertex $u_x$ in the cycle $C$ such that $w_x$ is not contained in $C$ (of course, by symmetry, the case when $w_x$ is in $C$, but $u_x$ is not, can be handled by the same way).  ({\it This is equivalent to saying that circuit   $\vec C$ contains $x$ only once.}) We can work with $u_x$ at each step of the process described in Lemma \ref{lem:one-circuit1}: we take the trail of
length 3 starting at $u_x$, and interchange the start and stop graphs if the first edge belongs to the stop graph (we will do this step again and again as a routine, observe that if we are given a swap sequence from one graph to another graph, then the reverse sequence transforms the second graph to the first one). And at every step the vertex $u_x$ remain in the cycle and $w_x$ will not become a vertex of the cycle.

\medskip\noindent{\bf Case 2}: Next assume that for each vertex $u_x\in C$ we also have $w_x \in C$, but also assume that we have a vertex $u_x$ in $C$, such that the trail of length 3 (along the ordering of  the cycle) starting at $u_x$ does not end at $w_x.$ Then we can use vertex $u_x$ at the process as before. After the first swap, there will be two vertices which will occur without their non-chord pairs in the new cycle. So we are back to Case 1.

\medskip\noindent{\bf Case 3}: Finally assume that neither Case 1 nor Case 2 applies.  We can handle this case as follows. Assume first that $C$ is long enough, that is $|C| \ge 8.$ As every vertex participates in one non-chord, one of the two different vertices that are of distance 3 (along the cycle) from any fixed vertex $u_x$, must differ from $w_x$. So we are back to Case 2 after reversing the description of $C$ and possibly interchanging the start
and stop graphs.

\noindent From now on we will call these ``usual''
swaps as {\bf $C_4$-swaps}.

\smallskip
When our cycle is of length 6, then no such trick works. ({\it In $\vec G_1$ we have an oriented triangle and $\vec G_2$ is identical with $\vec G_1$ except that it contains the other orientation of the same triangle. Then we have to use a new type of swap: we exchange the first oriented triangle to the second one.}) This means that in the bipartite graph we swap a $C_6$ with 3 non-chords in one step.  For obvious reason we will call this new swap as {\bf triangular   $C_6$-swap} and the cycle itself is called a {\bf triangular $C_6$ cycle}.

\begin{remark}\label{lm:no-trian}
We can describe now the type 2 triple-swaps mentioned in Section \ref{sec:intro} and introduced in {\rm \cite{KW73, K09}}: they are simply  non-triangular $C_6$-swaps that can be implemented by two $C_4$-swaps.
\end{remark}

\begin{lemma}\label{lem:nonew}
If $C$ is a cycle in the decomposition that is not a triangular $C_6$ cycle, then we can always perform the next swap without producing a new triangular $C_6$ cycle.
\end{lemma}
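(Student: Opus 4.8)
The plan is to exploit that a single $C_4$-swap is a very local operation. Performing it on a length-$3$ trail $v_0,v_1,v_2,v_3$ of $C$ deletes the three cycle edges $v_0v_1,v_1v_2,v_2v_3$ from the symmetric difference and inserts the single new chord $v_0v_3$; since $v_0v_3$ was not in the symmetric difference before the swap, it belongs to no other cycle of the decomposition, so the swap replaces $C$ by a cycle $C'$ with $|C'|=|C|-2$ and leaves every other cycle untouched. Hence any newly produced triangular $C_6$ must be $C'$ itself, and as a triangular $C_6$ has length $6$, this can only happen when $|C|=8$. For the other lengths I would finish at once: if $|C|=4$ the single swap empties the symmetric difference; if $|C|=6$ and $C$ is not triangular, then some distance-$3$ pair $\{p,p+3\}$ is a genuine chord (not a non-chord), so the corresponding swap is legal and gives $|C'|=4$; and if $|C|\ge 10$ a legal swap exists by the Case $1$--$3$ analysis preceding the lemma and gives $|C'|\ge 8$. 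In all these cases $|C'|\ne 6$, so no triangular $C_6$ is created.

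It remains to treat $|C|=8$, the heart of the matter. First I would pin down the non-chord structure along $C$: a non-chord joins some $u_x\in U$ to $w_x\in W$, hence two positions of opposite parity, so it has odd cyclic length; length $1$ is impossible since adjacent vertices of $C$ are joined by a genuine edge (never a non-chord), so every non-chord is a distance-$3$ pair. Writing $e_i=\{i,i+3\}$ for the eight distance-$3$ pairs on the positions $\mathbb{Z}_8$, the non-chords of $C$ form a matching $M\subseteq\{e_0,\dots,e_7\}$; since $e_i$ shares a vertex with each of $e_{i+3}$ and $e_{i-3}$, membership of $e_i$ in $M$ excludes both $e_{i+3}$ and $e_{i-3}$ from $M$. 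A swap on the trail starting at position $p$ creates the chord $e_p=\{p,p+3\}$, so it is legal exactly when $e_p\notin M$. A direct check shows the three distance-$3$ diagonals of the resulting hexagon $C'$ are $\{p,p+5\}$, $\{p+3,p+6\}$ and $\{p+4,p+7\}$, that is $e_{p+5},e_{p+3},e_{p+4}$; by the definition of a triangular $C_6$, the swap at $p$ produces one if and only if $e_{p+3},e_{p+4},e_{p+5}\in M$.

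Finally I would convert the matching constraint into the desired existence statement. Call a position $p$ \emph{bad} if it is legal and produces a triangular $C_6$. If $p$ is bad, then $e_{p+3},e_{p+4},e_{p+5}\in M$; applying the exclusion above to each of these three pairs removes $e_p,e_{p+1},e_{p+2},e_{p+6},e_{p+7}$ from $M$, forcing $M=\{e_{p+3},e_{p+4},e_{p+5}\}$. Consequently $|M|=3$ and $p$ is the \emph{unique} bad position, whereas the number of legal positions is $8-|M|=5$; at least four legal positions are therefore not bad, and any of them is the swap we want. If no position is bad, then any legal swap works, and legal swaps exist because there are $8-|M|\ge 8-4>0$ of them. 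This disposes of $|C|=8$ and finishes the lemma.

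I expect the $|C|=8$ case to be the only genuine obstacle, and within it the two delicate points are the explicit identification of which three non-chords make $C'$ a triangular $C_6$, and the observation that the matching structure makes a bad position so rigid that it pins $M$ down completely and hence can never crowd out all legal swaps. The reduction in the first paragraph and the forcing of non-chords to distance $3$ are routine once one notes that a $C_4$-swap introduces exactly one new edge into the symmetric difference and touches no other cycle.
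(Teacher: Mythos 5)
Your proof is correct, but it takes a genuinely different route from the paper's. The paper's own proof is a two-line corollary of the Case 1--3 analysis that precedes the lemma: in Case 1 the swap is performed at a vertex $u_x$ whose partner $w_x$ lies outside the cycle, and this property persists after the swap, so the shrunk cycle always contains an unpaired vertex and therefore can never be a triangular $C_6$ (all six vertices of a triangular $C_6$ come in partner pairs); in Cases 2 and 3 one has $|C|\ge 8$, and the first swap deletes two neighbouring vertices whose partners remain in the cycle, so the new cycle is back in Case 1 and the same invariant takes over from then on. You instead localize the danger: since a $C_4$-swap replaces $C$ by a single cycle of length $|C|-2$ and leaves every other cycle of the decomposition untouched, a new triangular $C_6$ could only be the shrunk cycle itself, hence only $|C|=8$ is critical, and you settle that case by a rigidity argument on the eight diagonals $e_i=\{i,i+3\}$: a bad starting position $p$ forces the non-chord matching to be exactly $\{e_{p+3},e_{p+4},e_{p+5}\}$, so at most one position is bad, while $8-|M|\ge 4$ positions are legal, and a safe legal swap therefore always exists. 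I checked the two delicate computations --- the identification of the diagonals of the new hexagon as $e_{p+3},e_{p+4},e_{p+5}$, and the $\pm 3$ exclusion rule for the matching --- and both are right, as is the reduction in your first paragraph. The trade-off: the paper's invariant argument is shorter, uniform in the cycle length, and names the safe swap explicitly; yours yields sharper quantitative information (on the critical $8$-cycles at most one of the legal swap positions is unsafe, and an unsafe position can exist only when the non-chords are exactly three consecutive diagonals) and makes transparent that length $8$ is the only place where anything could go wrong.
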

\begin{proof}
This is clearly the case when we are in Case 1.  If we are in Case 3 or in   Case 2, then $|C|\ge 8$ and after the first swap two neighboring vertices  are deleted from the cycle, resulting that we are back to Case 1 (two non-chords disappear).
\end{proof}

\smallskip\noindent It is important to recognize that sometimes triangular $C_6$-swaps are absolutely necessary: for example let $n\ge 2$ be an integer and consider the following $n+1$-element directed degree sequence: $\bdd = \big ( (n,n,\ldots,n, n-1, n-1, n-1); (n,n,\ldots,n, n-1, n-1, n-1)\big ).$ It is clear that there are exactly two different realizations of this directed degree sequence: namely this is a complete directed graph on $n+1$ vertices minus one oriented triangle -- and this oriented triangle can be of two different kinds. And for these realizations there are exactly one possible swap: the triangular $C_6$-swap on that six vertices of the $B(\vec G_i)$ realizations. (The simplest such example is $((1,1,1), (1,1,1))$.)

With these observations we just proved, that any realization of a directed degree sequence can be transferred to any other realization of the same degree sequence using only $C_4$- and triangular $C_6$-swaps. Therefore from now  on -- opposing papers \cite{KW73} and \cite{K09} -- we allow only these two types of swaps, while three-edge swaps of type 2 are not allowed anymore.

\bigskip\noindent Next we are going to analyze the structure of the triangular $C_6$ cycles in a maximal cycle decomposition with minimal number of triangular $C_6$ cycles. ({\it Let us start with an example $($see Figure \ref{fig:real}$)$: in this  directed degree sequence the realizations consists of two oriented triangles, sharing one vertex.})

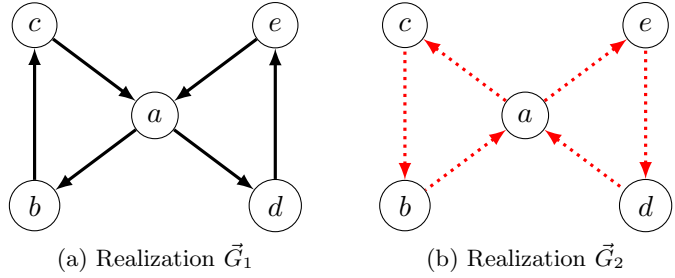
\begin{figure}[h!]
\center{Let the vertices be $X=\{a,b,c,d,e\}$ and $\bdd=\big ( (2,1,1,1,1);(2,1,1,1,1)\big )$ \medskip
 \subfloat[Realization $\vec G_1$]
    {
\begin{tikzpicture}[scale=0.4]
\begin{scope}[>=latex]
\node at (1,0) [shape=circle,draw] (p1) {$b$};
\node at (1,6) [shape=circle,draw] (p2) {$c$};
\node at (5,3) [shape=circle,draw] (p3) {$a$};
\node at (9,0) [shape=circle,draw] (p4) {$d$};
\node at (9,6) [shape=circle,draw] (p5) {$e$};
\draw [very thick,->] (p1) -- (p2);
\draw [very thick,->] (p2) -- (p3);
\draw [very thick,->] (p3) -- (p1);
\draw [very thick,->] (p3) -- (p4);
\draw [very thick,->] (p4) -- (p5);
\draw [very thick,->] (p5) -- (p3);
\end{scope}
\end{tikzpicture}
    }
    \qquad
\subfloat[Realization $\vec G_2$]
    {
\begin{tikzpicture}[scale=0.4]
\begin{scope}[>=latex]
\node at (1,0) [shape=circle,draw] (p1) {$b$};
\node at (1,6) [shape=circle,draw] (p2) {$c$};
\node at (5,3) [shape=circle,draw] (p3) {$a$};
\node at (9,0) [shape=circle,draw] (p4) {$d$};
\node at (9,6) [shape=circle,draw] (p5) {$e$};
\draw [very thick,dotted,red,<-] (p1) -- (p2);
\draw [very thick,dotted,red,<-] (p2) -- (p3);
\draw [very thick,dotted,red,<-] (p3) -- (p1);
\draw [very thick,dotted,red,<-] (p3) -- (p4);
\draw [very thick,dotted,red,<-] (p4) -- (p5);
\draw [very thick,dotted,red,<-] (p5) -- (p3);
\end{scope}
\end{tikzpicture}
    }
\caption{Two realizations}\label{fig:real}
    }

\end{figure}
\noindent Figure \ref{fig:bipart} shows the bipartite representation of the symmetric difference of the corresponding $B_1$ and $B_2$. It is easy to see that there are two possible cycle decompositions of this symmetric difference: one consists of two triangular $C_6$ cycles, but the other one contains none (see Figure \ref{fig:decomp}).
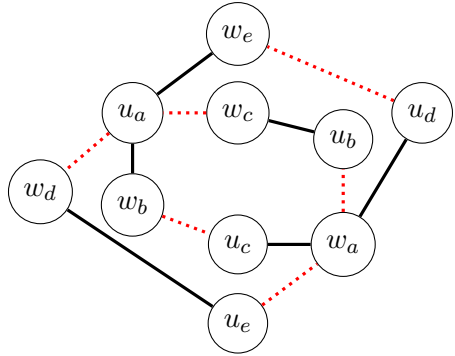
\begin{figure}[h!]
\center{
\begin{tikzpicture}[scale=0.35]
\begin{scope}[>=latex]
\node at (1,6) [shape=circle,draw] (p1) {$u_a$};
\node at (1,2.5) [shape=circle,draw] (p2) {$w_b$};
\node at (5,1) [shape=circle,draw] (p3) {$u_c$};
\node at (9,1) [shape=circle,draw] (p4) {$w_a$};
\node at (9,5) [shape=circle,draw] (p5) {$u_b$};
\node at (5,6) [shape=circle,draw] (p6) {$w_c$};

\node at (-2.5,3) [shape=circle,draw] (p7) {$w_d$};
\node at (5,-2) [shape=circle,draw] (p8) {$u_e$};
\node at (12,6) [shape=circle,draw] (p9) {$u_d$};
\node at (5,9) [shape=circle,draw] (p10) {$w_e$};
\draw [very thick] (p1) -- (p2);
\draw [very thick,dotted,red] (p2) -- (p3);
\draw [very thick] (p3) -- (p4);
\draw [very thick,dotted,red] (p4) -- (p5);
\draw [very thick] (p5) -- (p6);
\draw [very thick,dotted,red] (p6) -- (p1);
\draw [very thick,dotted,red] (p1) -- (p7);
\draw [very thick] (p7) -- (p8);
\draw [very thick,dotted,red] (p8) -- (p4);
\draw [very thick] (p4) -- (p9);
\draw [very thick,dotted,red] (p9) -- (p10);
\draw [very thick] (p10) -- (p1);
\end{scope}
\end{tikzpicture}
\caption{The bipartite representation of the symmetric difference}\label{fig:bipart}
    }
\end{figure}

It is a fortune that this is the typical behavior. We say that two cycles in the decomposition of the bipartite representation are {\bf kissing}, if there exists a vertex $x\in X$ such that both alternating cycles in the decomposition contain both $u_x$ and $w_x$. If one or both kissing cycles are triangular $C_6$ cycles then we can transform these two cycles into a new decomposition, without any triangular $C_6$. For that end we consider the four trails defined by $u_x$ and $w_x$ and pair them up in the right way.
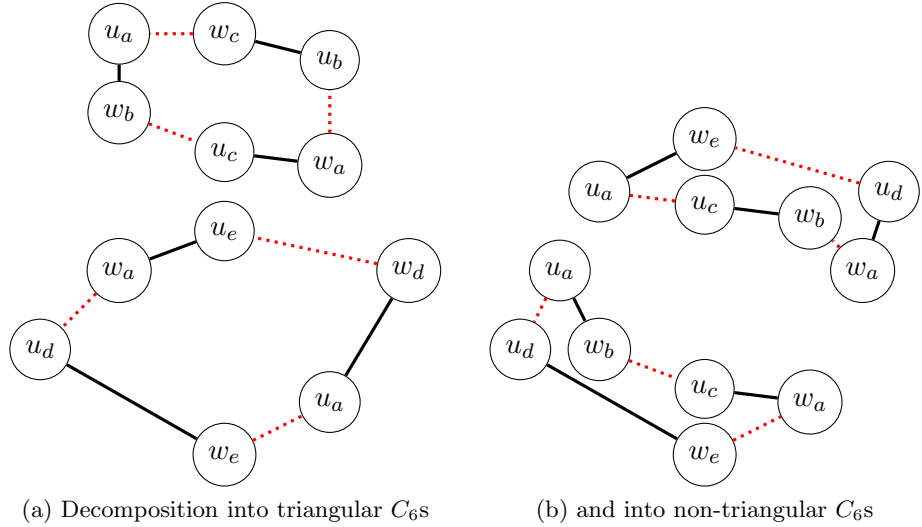
\begin{figure}[h!]
\center{\subfloat[Decomposition into triangular $C_6$s]
    {
\begin{tikzpicture}[scale=0.35]
\node at (1,15) [shape=circle,draw] (p1) {$u_a$};
\node at (1,12) [shape=circle,draw] (p2) {$w_b$};
\node at (5,10.5) [shape=circle,draw] (p3) {$u_c$};
\node at (9,10) [shape=circle,draw] (p4) {$w_a$};
\node at (9,14) [shape=circle,draw] (p5) {$u_b$};
\node at (5,15) [shape=circle,draw] (p6) {$w_c$};

\node at (1,6) [shape=circle,draw] (p11) {$w_a$};
\node at (9,1) [shape=circle,draw] (p12) {$u_a$};
\node at (-2,3) [shape=circle,draw] (p7) {$u_d$};
\node at (5,-1) [shape=circle,draw] (p8) {$w_e$};
\node at (12,6) [shape=circle,draw] (p9) {$w_d$};
\node at (5,7.5) [shape=circle,draw] (p10) {$u_e$};
\draw [very thick] (p1) -- (p2);
\draw [very thick,dotted,red] (p2) -- (p3);
\draw [very thick] (p3) -- (p4);
\draw [very thick,dotted,red] (p4) -- (p5);
\draw [very thick] (p5) -- (p6);
\draw [very thick,dotted,red] (p6) -- (p1);

\draw [very thick,dotted,red] (p11) -- (p7);
\draw [very thick] (p7) -- (p8);
\draw [very thick,dotted,red] (p8) -- (p12);
\draw [very thick] (p12) -- (p9);
\draw [very thick,dotted,red] (p9) -- (p10);
\draw [very thick] (p10) -- (p11);
\end{tikzpicture}
    } \quad
\subfloat[and into non-triangular $C_6$s]
    {
\begin{tikzpicture}[scale=0.35]
\node at (-.5,6) [shape=circle,draw] (p1) {$u_a$};
\node at (1,3) [shape=circle,draw] (p2) {$w_b$};
\node at (5,1.5) [shape=circle,draw] (p3) {$u_c$};
\node at (9,1) [shape=circle,draw] (p4) {$w_a$};
\node at (-2,3) [shape=circle,draw] (p7) {$u_d$};
\node at (5,-1) [shape=circle,draw] (p8) {$w_e$};

\node at (1,9) [shape=circle,draw] (p11) {$u_a$};
\node at (9,8) [shape=circle,draw] (p5) {$w_b$};
\node at (5,8.5) [shape=circle,draw] (p6) {$u_c$};
\node at (11,6) [shape=circle,draw] (p12) {$w_a$};
\node at (12,9) [shape=circle,draw] (p9) {$u_d$};
\node at (5,11) [shape=circle,draw] (p10) {$w_e$};
\draw [very thick] (p1) -- (p2);
\draw [very thick,dotted,red] (p2) -- (p3);
\draw [very thick] (p3) -- (p4);
\draw [very thick,dotted,red] (p1) -- (p7);
\draw [very thick] (p7) -- (p8);
\draw [very thick,dotted,red] (p8) -- (p4);

\draw [very thick,dotted,red] (p12) -- (p5);
\draw [very thick] (p5) -- (p6);
\draw [very thick,dotted,red] (p6) -- (p11);

\draw [very thick] (p12) -- (p9);
\draw [very thick,dotted,red] (p9) -- (p10);
\draw [very thick] (p10) -- (p11);
\end{tikzpicture}
    }
\caption{Two possible cycle decompositions}\label{fig:decomp}
    }
\end{figure}

\medskip \noindent With this observation we just proved the following structural property:
\begin{lemma}\label{lm:no-touch}
Assume that the alternating cycle decomposition $\mathcal{C}$ of the symmetric difference of $B(\vec G_1)$ and $B(\vec G_2)$ is a maximal one with minimum number of triangular $C_6$ cycles. Then no triangular $C_6$ cycle kisses any other cycle.
\end{lemma}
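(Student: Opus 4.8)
The plan is to argue by contradiction, using exactly the ``uncrossing'' surgery illustrated by Figures~\ref{fig:bipart} and~\ref{fig:decomp}. Suppose that in the chosen decomposition a triangular $C_6$ cycle $C$ kisses some other cycle $C'$, witnessed by a vertex $x\in X$, so that $C$ and $C'$ both pass through $u_x$ and through $w_x$. Since $C$ and $C'$ are simple cycles, cutting each of them at its (unique) occurrences of $u_x$ and of $w_x$ splits $C$ into two $u_x$--$w_x$ trails $P_1,P_2$ and $C'$ into two $u_x$--$w_x$ trails $Q_1,Q_2$. Each of these four trails has odd length, since its endpoints lie in opposite classes of $B(\vec G)$, so along each trail the two terminal edges share a colour; write $c(P)$ for that common colour. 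Because $C=P_1\cup P_2$ and $C'=Q_1\cup Q_2$ are alternating, the two edges of each cycle meeting at $u_x$ differ in colour, whence $c(P_1)\ne c(P_2)$ and $c(Q_1)\ne c(Q_2)$; after renaming within each pair we may assume $c(P_1)=c(Q_1)$ is red and $c(P_2)=c(Q_2)$ is blue.

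First I would recombine the four trails along the unique colour-respecting alternative to the original pairing: set $\tilde C:=P_1\cup Q_2$ and $\tilde C':=P_2\cup Q_1$, each glued at both $u_x$ and $w_x$. At each gluing point a red terminal edge meets a blue one, and internally the trails are alternating, so $\tilde C$ and $\tilde C'$ are alternating closed circuits on the same edge set $E(C)\cup E(C')$. Thus replacing $\{C,C'\}$ by $\{\tilde C,\tilde C'\}$ produces again an alternating circuit decomposition, with the same number of parts.

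The heart of the argument, and the step I expect to be the main obstacle, is to show that neither $\tilde C$ nor $\tilde C'$ is a triangular $C_6$. Write $C$ as the two oppositely oriented triangles on three vertices $x,y,z$; then one of its two $u_x$--$w_x$ halves, say $P_1$, carries precisely the red edges $u_xw_y$ and $u_zw_x$ (the directed edges $x\to y$ and $z\to x$). If $\tilde C=P_1\cup Q_2$ were a triangular $C_6$, its red triangle would have to contain these two arcs, hence would be forced to be $x\to y\to z\to x$, so $\tilde C$ would have to use the red edge $u_yw_z$. But $u_yw_z$ is the red edge of the other half $P_2\subseteq C$, so $u_yw_z\in E(C)$; as $u_yw_z\notin P_1$, it would then lie in $Q_2\subseteq C'$, contradicting the edge-disjointness of $C$ and $C'$. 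The symmetric computation shows that $\tilde C'=P_2\cup Q_1$ would be forced to use the edge $u_zw_y$, which belongs to $P_1\subseteq C$, again impossible. Hence neither recombined circuit is a triangular $C_6$.

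Finally I would close the contradiction. If $\tilde C$ and $\tilde C'$ are both simple cycles, then the new decomposition has the same cardinality but strictly fewer triangular $C_6$ cycles (we destroyed $C$ and created none), contradicting the minimality of the chosen decomposition. If instead one of them repeats a vertex, that repetition occurs at even distance along the circuit (consecutive vertices lie in opposite classes of $B(\vec G)$), so Lemma~\ref{th:even-distance} splits it into two shorter alternating circuits; iterating yields an alternating circuit decomposition of strictly larger cardinality, contradicting maximality. Either way we reach a contradiction, so no triangular $C_6$ cycle kisses any other cycle.
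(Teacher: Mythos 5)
Your proof is correct and takes essentially the same route as the paper: the paper's own (one-line) argument is precisely this surgery --- cut the two kissing cycles at $u_x$ and $w_x$ into four trails and ``pair them up in the right way'' --- and you have supplied the details it leaves implicit. In particular, your colour bookkeeping showing the recombination is alternating, the check that neither recombined circuit can be a triangular $C_6$ (via edge-disjointness of $C$ and $C'$), and the appeal to maximality through Lemma~\ref{th:even-distance} when a recombined circuit is not simple are exactly the justifications behind the paper's asserted ``new decomposition without any triangular $C_6$''.
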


\bigskip\noindent
We are ready now to define the swap-distance of two arbitrary realizations of the same directed degree sequence. We consider a {\bf weighted swap-distance}: an ordinary $C_4$-swap weighs one, but a triangular $C_6$-swap weighs two. (This convention is well supported with the fact that two kissing triangular cycles can be transformed into two ordinary, length 6 cycles, each of them transformable using two $C_4$-swaps.) So $\mathbf{dist}_d (\vec G_1,\vec G_2)$ denotes the minimum total weight of a swap sequence transforming $\vec G_1$ to $\vec G_2$. The definition of  $ \mc_d(\vec G_1,\vec G_2)$ is  analogous to the undirected case: this is the possible maximum number of directed cycles in an alternating  directed cycle decomposition of the symmetric difference of the edge sets.   Using these definitions we have the following result on the minimum directed swap-distance:
\begin{theorem}\label{th:directed}
Let $\bdd$ be a directed degree sequence with realizations $\vec G_1$ and $\vec G_2$. Then
\begin{equation}\label{eq:d-dist}
\mathbf{dist}_d (\vec G_1,\vec G_2) = H'(\vec G_1,\vec G_2) - \mc_d(\vec
G_1,\vec G_2).
\end{equation}
\end{theorem}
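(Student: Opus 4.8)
The plan is to reproduce the two halves of the proof of Theorem~\ref{th:SD-main1}, now inside the bipartite representation $B(\vec G)$ and for the weighted swap-distance. The governing observation is that every cycle of length $2t$ in an alternating cycle decomposition of the symmetric difference can be eliminated in total weight exactly $t-1$: a cycle that is not a triangular $C_6$ is processed by $t-1$ ordinary $C_4$-swaps, exactly as in Lemma~\ref{lem:one-circuit1}, whereas the sole exception, a triangular $C_6$ cycle (length $6$, so $t=3$), is removed by one triangular $C_6$-swap of weight $2=t-1$. Since the two conventions assign the same cost $t-1$ per cycle, the weighted identity has a chance to come out exactly.

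For the inequality $\mathbf{dist}_d\le H'-\mc_d$, I would fix a maximum alternating cycle decomposition $\mathcal C$ of the symmetric difference of $B(\vec G_1)$ and $B(\vec G_2)$ having, among all maximum decompositions, the fewest triangular $C_6$ cycles; by Lemma~\ref{lm:no-touch} no triangular $C_6$ cycle then kisses another, so the cycles may be dismantled independently. Each non-triangular cycle is handled by the procedure of Cases~1--3 together with Lemma~\ref{lem:nonew}, which guarantee that every $C_4$-swap shortens the current cycle by two edges, introduces no non-chord, and creates no new triangular $C_6$; hence such a cycle is exhausted in $t-1$ swaps, and each triangular $C_6$ cycle costs the single weight-$2$ swap. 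Summing over $\mathcal C$, and using $\sum_{C\in\mathcal C}t_C=H'(\vec G_1,\vec G_2)$ and $|\mathcal C|=\mc_d(\vec G_1,\vec G_2)$, the total weight is $\sum_{C\in\mathcal C}(t_C-1)=H'(\vec G_1,\vec G_2)-\mc_d(\vec G_1,\vec G_2)$.

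For the reverse inequality I would follow part~(ii) of Theorem~\ref{th:SD-main1}. Fix a minimum-weight sequence $\vec G_1=\vec H_0,\dots,\vec H_k=\vec G_2$, put $\Delta_i=E(\vec G_1)\,\Delta\,E(\vec H_i)$, let $W_i$ be the accumulated weight (so $W_i=\mathbf{dist}_d(\vec G_1,\vec H_i)$, as prefixes of a minimum sequence are minimum), and set $\phi_i=H'(\vec G_1,\vec H_i)-W_i-\mc_d(\vec G_1,\vec H_i)$. As $\phi_0=0$, it suffices to prove $\phi_{i+1}\le\phi_i$. Writing $S=E(\vec H_i)\,\Delta\,E(\vec H_{i+1})$, $\ell=|S\cap\Delta_i|$ and $w\in\{1,2\}$ for the weight of this swap, we have $\Delta_{i+1}=\Delta_i\,\Delta\,S$ and hence $H'(\vec G_1,\vec H_{i+1})-H'(\vec G_1,\vec H_i)=|S|/2-\ell$. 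The one combinatorial input is the merging bound
\begin{equation}\label{eq:merge}
\mc_d(\vec G_1,\vec H_{i+1})-\mc_d(\vec G_1,\vec H_i)\ \ge\ 1-\ell .
\end{equation}
To prove \eqref{eq:merge}, observe that the $\ell$ edges of $S\cap\Delta_i$ meet at most $\ell$ circuits $D_1,\dots,D_p$ ($p\le\ell$) of a maximum decomposition of $\Delta_i$; the remaining circuits are untouched, while $(D_1\cup\cdots\cup D_p)\,\Delta\,S$ is a balanced red-blue subgraph of the loop-free graph $\Delta_{i+1}$ and so decomposes into $q\ge0$ alternating circuits by Proposition~\ref{prop:direuler}. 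Since $S$ is a single circuit ($C_4$ or triangular $C_6$), it can coincide with $D_1\cup\cdots\cup D_p$ only when $p=1$ and $S=D_1$, the only case $q=0$; otherwise $q\ge1$. In every case $q-p\ge 1-\ell$, which is \eqref{eq:merge}. Finally, for both a $C_4$-swap ($|S|=4$, $w=1$) and a triangular $C_6$-swap ($|S|=6$, $w=2$) we get
\begin{equation}\label{eq:phicheck}
\phi_{i+1}-\phi_i=\Big(\tfrac{|S|}{2}-\ell\Big)-w-\big(\mc_d(\vec G_1,\vec H_{i+1})-\mc_d(\vec G_1,\vec H_i)\big)\le \tfrac{|S|}{2}-w-1=0,
\end{equation}
the inequality being \eqref{eq:merge} and the final equality the identity $|S|/2-w=1$. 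Thus $\phi_k\le0$, i.e.\ $\mathbf{dist}_d\ge H'-\mc_d$, and together with the first part this gives \eqref{eq:d-dist}.

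The genuinely new ingredient, and the main obstacle, is the triangular $C_6$-swap. It is precisely the weight $2$ that forces $|S|/2-w=1$ to hold simultaneously for the four-edge $C_4$-swap and the six-edge triangular $C_6$-swap, so that \eqref{eq:phicheck} closes in both directions; making this bookkeeping balance is the crux. I expect the two points needing care to be the degenerate case $S=D_1$ of \eqref{eq:merge} for a six-edge $S$, and the verification that no step ever manufactures a non-chord $u_xw_x$ (equivalently, a loop in $\vec G$) -- which holds automatically because every $\Delta_i$ is the symmetric difference of honest realizations and is therefore loop-free, so all circuits furnished by Proposition~\ref{prop:direuler} are legitimate directed alternating circuits.
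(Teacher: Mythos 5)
Your proposal is correct. The upper-bound half coincides with the paper's: take a maximum alternating cycle decomposition, dismantle each cycle at cost $t_C-1$ via Cases 1--3 and Lemma~\ref{lem:nonew}, removing each triangular $C_6$ cycle by one weight-$2$ swap (your extra appeal to Lemma~\ref{lm:no-touch} here is unnecessary, since the cycles of a decomposition are edge-disjoint and each swap touches only its own cycle plus one chord lying outside the symmetric difference, so they can be processed one after another in any case; but it does no harm). The genuine difference is in the lower bound. The paper also inducts along a minimum-weight sequence, mimicking part (ii) of Theorem~\ref{th:SD-main1}, but to deal with a triangular $C_6$-swap it first chooses, among all minimum-weight sequences, one with the fewest triangular swaps, and then asserts -- as a ``simple straightforward generalization'' of Lemma~\ref{lm:no-touch}, given without proof -- that such a swap cannot share a non-chord with any earlier generated cycle; note that even granting this assertion, a triangular swap could still share \emph{edges} with $\Delta_i$, a case the paper never explicitly treats. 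You avoid all of this: your merging inequality $\mc_d(\vec G_1,\vec H_{i+1})-\mc_d(\vec G_1,\vec H_i)\ge 1-\ell$, whose degenerate case $q=0$ forces $p=1$ and $S=D_1$, applies uniformly to $C_4$-swaps and to edge-sharing triangular swaps, and the induction closes by the arithmetic identity $|S|/2-w=1$, valid for both swap types precisely because the triangular swap is given weight $2$. What this buys is a lower-bound argument that needs neither the extremal choice of the swap sequence nor Lemma~\ref{lm:no-touch}, is complete where the paper's is only sketched, and makes transparent why $2$ is the unique weight for which the Gallai-type identity can hold.
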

\begin{proof}
We can prove that the LHS is at most as big as  the RHS by recalling the proof
from Theorem \ref{th:SD-main1} for bipartite graphs taking into consideration
the previous observations. And, by Lemma \ref{lem:nonew} we do not create any
new triangular $C_6$ cycle.

Consider now the equivalent form of inequality (\ref{eq:induct1}). Fix a
suitable swap sequence of minimal weighted length and assume that this
contains the smallest possible number of triangular $C_6$-swaps among all such
sequences. Denote by $B(\vec G_1)= H_0, H_1,\ldots, H_{k-1}, H_k=B(\vec G_2)$
the bipartite graph sequence which consists of the consecutive realizations
generated by this swap sequence (then for each $i=0,\ldots, k-1$, the graphs
$H_i$ and $H_{i+1}$ are in swap-distance 1 or 2, depending whether the swap
was a simple $C_4$-swap or a triangular $C_6$-swap). It is clear that
\begin{enumerate}[{\rm (A)}]
\item   {\it any consecutive  swap subsequence from $H_i$ to $H_j$ must be also a minimum one, furthermore it must contain the smallest possible number of triangular $C_6$-swaps among all such subsequences.}
\end{enumerate}
 For each $i$ we use the following notation
$$\Delta_i :=E(H_0) \Delta E(H_i) .$$
We will now revisit the proof of Theorem \ref{th:SD-main1} (ii). We will try to mimic that proof. Whenever the swap under study is a triangular $C_6$-swap, then it cannot share a non-chord with any earlier generated cycle. (This comes form a simple straightforward  generalization of Lemma \ref{lm:no-touch}.)

Whenever the swap under investigation is a regular $C_4$-swap, then we can proceed as in the proof of Theorem \ref{th:SD-main1} (ii).  And this concludes the proof of Theorem \ref{th:directed}.
\end{proof}

\medskip\noindent We can strengthen LaMar's recent result (\cite{lamar}):
\begin{theorem}\label{lm:lamar}
If we modify the definition of weighted swap-distance between two realizations of a directed degree sequence such that any length 6 circuit can be swapped in one step, and their weights are 2, then Theorem \ref{th:directed} still holds, and there exists a shortest swap sequence between any two realizations, $\vec G_1$ and $\vec G_2$, of the same directed degree sequence which contains only $C_4$-swaps and triangular $C_6$-swaps.
\end{theorem}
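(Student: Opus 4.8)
The plan is to compare the modified (enlarged) weighted distance, which I denote $\mathbf{dist}_d'$, with the distance $\mathbf{dist}_d$ of Theorem \ref{th:directed} that allows only $C_4$-swaps and triangular $C_6$-swaps, and to show that the two coincide. One inequality is immediate: every swap sequence legal for $\mathbf{dist}_d$ is also legal for $\mathbf{dist}_d'$ and carries the same total weight, so $\mathbf{dist}_d'\le\mathbf{dist}_d$. The substance of the theorem is the reverse inequality $\mathbf{dist}_d\le\mathbf{dist}_d'$; once it is in hand, both the identity and the ``only $C_4$- and triangular $C_6$-swaps'' statement follow at once from Theorem \ref{th:directed}.

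To obtain the reverse inequality I would use a purely local replacement argument. Fix a shortest sequence for the enlarged model, $\vec G_1=\vec H_0,\vec H_1,\ldots,\vec H_k=\vec G_2$, and examine any step $\vec H_i\to\vec H_{i+1}$ that is neither a $C_4$-swap nor a triangular $C_6$-swap; such a step swaps a length-$6$ circuit of weight $2$. Passing to the bipartite representation, the symmetric difference $E(H_i)\Delta E(H_{i+1})$ is a single alternating $C_6$ (a closed alternating trail of $6$ edges in a simple bipartite graph cannot break into smaller even cycles, so it is a genuine $C_6$). If this $C_6$ is not triangular, then by the Case 1--3 discussion preceding Lemma \ref{lem:nonew} it falls into Case 1: it carries at most two non-chords, hence some $u_x$ in it has its mate $w_x$ outside it. Lemma \ref{lem:one-circuit1}, applied to this single elementary circuit of length $2t=6$, then transforms it by exactly $t-1=2$ ordinary $C_4$-swaps through a legal intermediate realization $\vec H_i'$, using only chords so that no loop is ever created. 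This is exactly Remark \ref{lm:no-trian}.

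Replacing the single weight-$2$ step $\vec H_i\to\vec H_{i+1}$ by the two weight-$1$ steps $\vec H_i\to\vec H_i'\to\vec H_{i+1}$ preserves the total weight and leaves the remainder of the sequence intact. Performing this substitution at every forbidden step converts the enlarged shortest sequence into a sequence of the same total weight $\mathbf{dist}_d'$ that uses only $C_4$- and triangular $C_6$-swaps; hence $\mathbf{dist}_d\le\mathbf{dist}_d'$. Combined with the trivial inequality this gives $\mathbf{dist}_d'=\mathbf{dist}_d=H'(\vec G_1,\vec G_2)-\mc_d(\vec G_1,\vec G_2)$ by Theorem \ref{th:directed}, which is the claimed identity. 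For the last assertion, any shortest restricted sequence guaranteed by Theorem \ref{th:directed} has weight $\mathbf{dist}_d=\mathbf{dist}_d'$, so it is simultaneously a shortest sequence in the enlarged model and, by construction, contains only $C_4$- and triangular $C_6$-swaps.

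The only point that genuinely needs verification — and hence the main obstacle — is the structural claim that a non-triangular length-$6$ circuit always falls into Case 1 and therefore splits into two $C_4$-swaps through a loop-free intermediate realization. This reduces to the observation that in a $C_6$ the only vertex pairs capable of being non-chords are its three long diagonals, so a $C_6$ carrying three non-chords is forced into the triangular pattern, whereas every other $C_6$ leaves some $u_x$ without its partner $w_x$, which is precisely Case 1. Granting this, Lemma \ref{lem:one-circuit1} supplies the two swaps together with the no-loop property for free, so no further computation is required.
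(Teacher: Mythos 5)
Your proof is correct and follows essentially the same route as the paper: both arguments replace each non-triangular length-$6$ circuit swap by two $C_4$-swaps of equal total weight, using the Case 1--3 analysis preceding Lemma \ref{lem:nonew} to guarantee that the intermediate realization is loop-free. Your write-up merely spells out details the paper leaves implicit (the trivial inequality, the fact that a length-$6$ alternating circuit in the bipartite representation is a genuine $C_6$), and your long-diagonal observation even sharpens the paper's ``Case 1 or 2'' to Case 1 alone.
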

\noindent The novelty here is that there always exists shortest swap sequence which is conform with LaMar's idea.
\begin{proof}
Any length 6 circuit  which is not a triangular $C_6$ cycle falls in the above-discussed Case 1 or 2, and can be transformed with two $C_4$-swaps, whose summed weight is also 2.
\end{proof}

Theorem \ref{th:bip_bound} transforms to the following.  Let us given a directed degree
sequence $\bdd=\big( (d_1^+, \ldots,d_n^+ ), (d_1^-,\ldots ,d_n^- )\big )$.
Let
$$
m=\sum d_i^+ \quad\mbox{and}\quad m^* = \sum_i \left [ \min \left (d_i^+,
n-d_i^+\right )+\min\left (d_i^-, n-d_i^- \right ) \right ]
$$
where $m^*$ is an upper bound on the number of edges in a \BrbG\ associated with two realizations $\vec G_1$ and $\vec G_2$. Then
\begin{theorem}\label{th:dir_bound}
For all pairs of realizations $\vec G_1, \vec G_2$ of the same degree sequence
$\bdd$ we have
\begin{eqnarray*}
\mathbf{dist}_d (\vec G_1,\vec G_2) & \le & H'(\vec G_1,\vec G_2)\cdot \left (1- \frac{1}{n} \right ) \\
& \le & m^* \left (\frac{1}{2}- \frac{1}{2n} \right )\le m \left (1- \frac{1}{n}\right ).
\end{eqnarray*}
\end{theorem}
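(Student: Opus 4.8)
The plan is to transfer Theorem~\ref{th:bip_bound} to the directed setting through the bipartite representation $B(\vec G)$, exactly as the text preceding the statement indicates, and then to reduce the three displayed inequalities to the identity of Theorem~\ref{th:directed} together with three elementary estimates. Write $H'=H'(\vec G_1,\vec G_2)$ and let $\vec G$ be the associated balanced red-blue graph, so that $B(\vec G)$ has exactly $H'$ red and $H'$ blue edges, while each of its two classes $U$ and $W$ consists of $n$ vertices, one per vertex of $\vec G$.

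The first and main inequality I would obtain from a bound on the length of the cycles in a decomposition, playing the role that ``any alternating cycle has length at most $n'$'' plays in the proof of Theorem~\ref{th:bip_bound}. Every alternating cycle $C$ of $B(\vec G)$ is a simple cycle alternating between $U$ and $W$ (Proposition~\ref{prop:euler}), so it visits at most $n$ vertices of each class and hence contains at most $n$ red edges. Consequently any alternating cycle decomposition of $B(\vec G)$ uses at least $H'/n$ cycles, and since $\mc_d(\vec G_1,\vec G_2)$ is the maximum number of cycles over all such decompositions, $\mc_d(\vec G_1,\vec G_2)\ge H'/n$. Substituting into $\mathbf{dist}_d(\vec G_1,\vec G_2)=H'-\mc_d(\vec G_1,\vec G_2)$ from Theorem~\ref{th:directed} gives
$$
\mathbf{dist}_d(\vec G_1,\vec G_2)\le H'-\frac{H'}{n}=H'\left(1-\frac1n\right).
$$
The weighting of triangular $C_6$-swaps needs no separate handling here: it is already absorbed into the identity of Theorem~\ref{th:directed}, and $\mc_d$ is a plain count of cycles, so the counting step is insensitive to whether a cycle is triangular.

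For the remaining two inequalities I would verify the two facts about $m^*$ recorded just before the statement. Since $1-\frac1n=2\left(\frac12-\frac1{2n}\right)$, the middle inequality is equivalent to $2H'\le m^*$. For each vertex $x_i$ the number of red out-edges is $|N^+_{\vec G_1}(x_i)\setminus N^+_{\vec G_2}(x_i)|$, which is at most $d_i^+$ (there are only $d_i^+$ out-edges in $\vec G_1$) and at most $n-d_i^+$ (such an edge must point to a vertex that is not an out-neighbour in $\vec G_2$), hence at most $\min(d_i^+,n-d_i^+)$. Summing over $i$ and using that $H'$ equals the total red out-degree gives $H'\le\sum_i\min(d_i^+,n-d_i^+)$, and the symmetric argument through in-degrees gives $H'\le\sum_i\min(d_i^-,n-d_i^-)$; adding the two yields $2H'\le m^*$. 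The last inequality reduces to $m^*\le 2m$, which is immediate from $\min(d_i^+,n-d_i^+)\le d_i^+$ and $\min(d_i^-,n-d_i^-)\le d_i^-$ summed over $i$, together with $\sum_i d_i^+=\sum_i d_i^-=m$.

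The only genuinely substantive point is the cycle-length estimate behind $\mc_d\ge H'/n$; everything else is bookkeeping. The step to be careful with is the assertion that each cycle has length at most $2n$: it relies on the decomposition of the bipartite balanced red-blue graph $B(\vec G)$ being into simple cycles (Proposition~\ref{prop:euler}) rather than mere circuits, so that the elementary ``at most $n$ vertices per class'' bound applies. Both classes of $B(\vec G)$ have size $n$, so the role played by $\ell$ in Theorem~\ref{th:bip_bound} is taken by $n$; this is precisely what turns the factor $1-2/n'$ there into $1-1/n$ here. Once this is in place, the chain of inequalities closes with no further difficulty.
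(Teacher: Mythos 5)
Your proof is correct and takes essentially the same route the paper intends: the paper presents Theorem~\ref{th:dir_bound} as a direct transformation of Theorem~\ref{th:bip_bound}, namely the identity of Theorem~\ref{th:directed} combined with $\mc_d(\vec G_1,\vec G_2)\ge H'/n$ (from the length-$2n$ bound on alternating cycles in $B(\vec G)$, whose classes both have size $n$) and the bookkeeping $2H'\le m^*\le 2m$. You have simply written out the details the paper leaves implicit, including the correct verification of the two $m^*$ estimates.
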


\bigskip\noindent
To finish our paper, recall that Greenhill proved in \cite[Lemma 2.2]{green} that  in case of regular degree sequences any two directed realizations can be transformed to each other using $C_4$-swaps only. (In this case she calls these $C_4$-swaps {\em switches}.) A similar notion was studied by Berger and M\"uller-Hannemann (see \cite{BMH}). However consider the following example: let $\bdd$ be a two-regular directed degree sequence with six vertices. In Figure \ref{fig:green} we show two realizations:
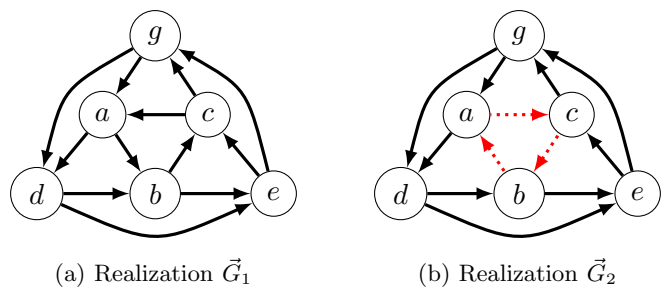
\begin{figure}[h!]
\center{\subfloat[Realization $\vec G_1$]
    {
\begin{tikzpicture}[scale=0.35]
\begin{scope}[>=latex]
\node at (2.5,3) [shape=circle,draw] (p1) {$a$};
\node at (4.5,0) [shape=circle,draw] (p2) {$b$};
\node at (6.5,3) [shape=circle,draw] (p3) {$c$};
\node at (0,0) [shape=circle,draw] (p4) {$d$};
\node at (9,0) [shape=circle,draw] (p5) {$e$};
\node at (4.5,6) [shape=circle,draw] (p6) {$g$};
\draw [very thick,->] (p1) -- (p2);
\draw [very thick,->] (p2) -- (p3);
\draw [very thick,->] (p3) -- (p1);
\draw [very thick,->] (p1) -- (p4);
\draw [very thick,->] (p4) -- (p2);
\draw [very thick,->] (p2) -- (p5);
\draw [very thick,->] (p5) -- (p3);
\draw [very thick,->] (p3) -- (p6);
\draw [very thick,->] (p6) -- (p1);
\draw [very thick,->] (p4) .. controls (4.5,-2) .. (p5);
\draw [very thick,->] (p5) .. controls (8,4) .. (p6);
\draw [very thick,->] (p6) .. controls (1,4) .. (p4);
\end{scope}
\end{tikzpicture}
    } \qquad
\subfloat[Realization $\vec G_2$]
    {
\begin{tikzpicture}[scale=0.35]
\begin{scope}[>=latex]
\node at (2.5,3) [shape=circle,draw] (p1) {$a$};
\node at (4.5,0) [shape=circle,draw] (p2) {$b$};
\node at (6.5,3) [shape=circle,draw] (p3) {$c$};
\node at (0,0) [shape=circle,draw] (p4) {$d$};
\node at (9,0) [shape=circle,draw] (p5) {$e$};
\node at (4.5,6) [shape=circle,draw] (p6) {$g$};
\draw [very thick,dotted,red,<-] (p1) -- (p2);
\draw [very thick,dotted,red,<-] (p2) -- (p3);
\draw [very thick,dotted,red,<-] (p3) -- (p1);
\draw [very thick,->] (p1) -- (p4);
\draw [very thick,->] (p4) -- (p2);
\draw [very thick,->] (p2) -- (p5);
\draw [very thick,->] (p5) -- (p3);
\draw [very thick,->] (p3) -- (p6);
\draw [very thick,->] (p6) -- (p1);
\draw [very thick,->] (p4) .. controls (4.5,-2) .. (p5);
\draw [very thick,->] (p5) .. controls (8,4) .. (p6);
\draw [very thick,->] (p6) .. controls (1,4) .. (p4);
\end{scope}
\end{tikzpicture}
    }
    }
\caption{Two realizations with a triangular $C_6$ as symmetric difference}
\label{fig:green}
\end{figure}
The symmetric difference of these two realizations is one triangular $C_6$ cycle. Therefore the swap sequence generated by Greenhill cannot be a minimal one. (Of course in her application this was never a requirement: she uses the above mentioned result successfully to prove rapid mixing time of the sampling algorithm for regular directed bipartite graphs.)

\end{document}